\numberwithin{equation}{section}%
\newtheorem{theorem}{Theorem}[section]%
\newtheorem{proposition}[theorem]{Proposition}%
\newtheorem{remark}[theorem]{Remark}%
\newtheorem{corollary}[theorem]{Corollary}%
\begin{document}

\title{The error of Chebyshev approximations on shrinking domains}
\author{Tobias Jawecki\footnote{Email: tobias.jawecki@gmail.com}}


\abstract{Previous works show convergence of rational Chebyshev approximants to the Pad\'e approximant as the underlying domain of approximation shrinks to the origin. In the present work, the asymptotic error and interpolation properties of rational Chebyshev approximants are studied in such settings. Namely, the point-wise error of Chebyshev approximants is shown to approach a Chebyshev polynomial multiplied by the asymptotically leading order term of the error of the Pad\'e approximant, and similar results hold true for the uniform error and Chebyshev constants. Moreover, rational Chebyshev approximants are shown to attain interpolation nodes which approach scaled Chebyshev nodes in the limit. Main results are formulated for interpolatory best approximations and apply for complex Chebyshev approximation as well as real Chebyshev approximation to real functions and unitary best approximation to the exponential function.}
\keywords{rational Chebyshev approximation, asymptotic error, shrinking domains}
\pacs[MSC2020]{%
30E10,
41A05,
41A20,
41A25,
41A50
}

\maketitle

\section{Introduction and overview}

In the present work we consider rational approximations to complex functions $f$ defined on $E\subset\mathbb{C}$, where $E$ refers to a given domain which is open and contains the origin. We assume that $f:E\to \mathbb{C}$ is holomorphic on $E$.
Moreover, we let $K\subset E$ denote the domain of approximation, assuming that $K$ is a compact set which has no isolated points.
The most relevant cases might be approximations on the interval $[-1,1]$ or the unit disk $\Delta := \{ z\in\mathbb{C}~|~ |z|\leq 1\}$, i.e., $K=[-1,1]$ or $K=\Delta$.

We let $\mathcal{R}_{mn}$ denote the set of rational functions of the form $r=p/q$ where $p$ and $q$ are complex polynomials of degree $\leq m$ and $\leq n$, respectively, and $q\not\equiv 0$. We may also refer $r\in\mathcal{R}_{mn}$ as  
an $(m,n)$-rational function.
For a complex function $g: K\to \mathbb{C}$ the uniform norm on $K$ is defined as
\begin{equation*}
\|g\|_K:= \max_{z\in K}|g(z)|.
\end{equation*}
We refer to $\widehat{r}\in\mathcal{R}_{mn}$ as a {\em Chebyshev approximant} to $f$ on $K$ if $\widehat{r}$ minimizes the uniform error
\begin{equation*}
\|\widehat{r} - f\|_{K}
= \min_{r\in\mathcal{R}_{mn}} \|r - f\|_{K}.
\end{equation*}
Previous works, most importantly \cite[Theorem~III]{Wa31}, show existence of Chebyshev approximants in the present setting.
We remark that rational Chebyshev approximants in a complex setting may not be unique, cf.~\cite{SV78,GT83}.

The main focus in the present work is on approximations on the scaled set $\varepsilon K = \{\varepsilon z\in\mathbb{C}~|~z\in K\}$, $\varepsilon>0$. In this context, the norm $\|g\|_{\varepsilon K}$ corresponds to the uniform norm on the set $\varepsilon K$, in particular, $\|g\|_{\varepsilon K}=\|g(\varepsilon \cdot)\|_{K}$.

\smallskip
{\em Interpolatory best approximations.}
While our main focus is on Chebyshev approximants, our results apply in a more general setting. Namely, for interpolatory best approximants, i.e., rational functions which minimize a uniform error in a set of rational interpolants. In the present work we show that complex Chebyshev approximants attain interpolation nodes on a disk assuming the underlying domain of approximation is sufficiently small, and thus, on shrinking domains Chebyshev approximation can be understood as interpolatory best approximation. On the other hand, the setting of interpolatory best approximations also includes real Chebyshev approximation to real functions on the interval and unitary best approximation to the exponential function introduced in~\cite{JS23u}.

{\em Convergence and asymptotic error.}
Convergence of rational Chebyshev approximants on $\varepsilon K$ to the Pad\'e approximant as $\varepsilon \to 0$ goes back to~\cite[Theorem~3b]{TG85}. Similar results appeared earlier in~\cite{Wa34,Wa64,CSS74,Wa74,GT83}. Convergence of rational Chebyshev approximants to the Pad\'e approximant on shrinking domains does not imply that the asymptotic error of these approximations coincides in general~\cite[Section~3]{TG85}. Results concerning the asymptotic error of Chebyshev approximation on a shrinking interval appeared earlier in~\cite[Theorem~103]{Me67} with a reference to~\cite{Me6x} for the proof of this theorem. For the asymptotic error of real polynomial Chebyshev approximation ($n=0$) on the interval, we also refer to~\cite{MW60a,Ni62} (summarized in~\cite[Theorem~62]{Me67}).

In the present work we show that similar results hold true for the asymptotic error of interpolatory best approximations (including rational Chebyshev approximation) on arbitrary compact domains in the complex plane. In particular, we show that the point-wise error of interpolatory best approximants approaches a Chebyshev polynomial of $K$ multiplied by the asymptotically leading order term of the error of the Pad\'e approximant. Respectively, the uniform error scales with a Chebyshev constant of $K$ in the limit. Moreover, we show that a rational approximation based on interpolation using scaled Chebyshev nodes as interpolation nodes attains the same asymptotic error as rational Chebyshev approximation.

The main focus of the present work is on the asymptotic error of interpolatory best approximations on shrinking domains for a fixed degree $m$ and $n$. The asymptotic error of Chebyshev approximation on a fixed domain with increasing degree, i.e., $m+n\to \infty$, is discussed in~\cite{Wa69,LS06b,Tre23} and others. While for the limit $m+n\to \infty$ most results focus on the rate of geometric convergence of the error, more explicit representations for the asymptotic error are available when considering the limit $\varepsilon\to 0$ as done in the present work.

For the $(m,n)$-Chebyshev approximation to the exponential function $f=\exp$ on a fixed disk and interval, the asymptotic error for $m+n\to\infty$ is given in explicit form in~\cite{Tre84} and~\cite{Bra84}, respectively. In particular, the uniform error in the limit $m+n\to \infty$ for the interval $[-1,1]$ (unit disk $\Delta$) and in the limit $\varepsilon\to 0$ for fixed degrees $m$ and $n$ and the domain $\varepsilon K$ with $K=[-1,1]$ ($K=\Delta$) attains the same asymptotically leading order term (cf.\ Subsection~\ref{subsec:exp}). It is not clear to the authors of the present work if a similar relation holds true in general. 

\subsection*{Outline}
In Section~\ref{sec:interpolation} we recall some classical results considering rational interpolation and Pad\'e approximation, and in Section~\ref{sec:continuity} we provide some auxiliary results related to continuity of rational interpolants on the nodes. The asymptotic error of rational interpolants with scaled interpolation nodes is provided in Proposition~\ref{prop:convinterp} (Section~\ref{sec:asymerrinterp}). In Section~\ref{sec:chebyshevnodes} we recall classical properties of Chebyshev nodes, and we remark that asymptotic error for rational interpolants with scaled nodes is optimal for Chebyshev nodes.
In Section~\ref{sec:ibest} we introduce interpolatory best approximations.
In Proposition~\ref{prop:Chebinterpolates} (Section~\ref{sec:chebipnodes}) we show that Chebyshev approximants attain interpolation nodes, assuming the domain of approximation is sufficiently small, and thus, Chebyshev approximants can be understood as interpolatory best approximants.
The asymptotic error of interpolatory best approximations on shrinking domains, as well as convergence properties of the underlying interpolation nodes, are provided in Theorem~\ref{thm:asymresults} (Section~\ref{sec:asymerrorbest}).
Some applications of the presented results, including relations to previous asymptotic error representations, are discussed in Section~\ref{sec:consequences}.

\section{Rational interpolation}\label{sec:interpolation}

In the present section we summarize some classical results regarding rational interpolation, which is also referred to as Newton--Pad\'e approximation, multipoint Pad\'e approximation, or osculatory rational interpolation in the literature. For details we refer to~\cite{Wu75, GJ76, Cla78, Gu90} and others. In particular, these references also consider the case that the underlying interpolation nodes may have higher multiplicity which is essential for the present work.
For the simpler case of having distinct interpolation nodes we also refer to~\cite{MW60b} and~\cite[Section~2]{Be70} for an overview. A review on rational interpolation, focusing on Pad\'e approximation, is also available in~\cite[Subsection~7.1]{BG96}.

In the following, we consider interpolants $r\approx f$ with $r\in\mathcal{R}_{mn}$ and $m+n+1$ interpolation nodes $z_0,\ldots,z_{m+n}\in E$ allowing nodes of higher multiplicity.

{\em Divided differences.} To proceed we first recall the definition of divided differences (cf.~\cite{Bo05} or \cite[Subsection~B.16]{Hi08}). 
While divided differences have various equivalent definitions, it is sufficient for the present work to consider its definition in terms of contour integration as for instance in \cite[eq.~(B.29)]{Hi08}. For a complex function $g:E\to \mathbb{C}$ which is holomorphic on $E$, the divided differences at nodes $z_0,\ldots,z_j\in E$, $j\geq 0$, corresponds to
\begin{equation}\label{eq:defdd}
g[z_0,\ldots,z_j] := \frac{1}{2\pi \mathrm{i}} \int_\Gamma \frac{g(\zeta)}{(\zeta-z_0)\cdots(\zeta-z_j)} \,\mathrm{d}\zeta,~~~j\geq 0,
\end{equation}
where $\Gamma$ denotes a closed contour in $E$ that encloses $z_0,\ldots,z_j$.

We make use of the following auxiliary result further below in the present work.
\begin{proposition}\label{prop:ddcont}
Let $\{z_{k,\ell}\}_{\ell\in\mathbb{N}}$ denote a given sequence of nodes with $z_{k,\ell}\to z_k$ for $\ell \to \infty$ and $k=0,\ldots,j$. Let $\{g_{\ell}\}_{\ell\in\mathbb{N}}$ denote a sequence of holomorphic functions $g_{\ell}:E\to \mathbb{C}$.
Assume $\|g_{\ell} -g \|_{K'}\to 0$ for $\ell\to \infty$ where $K'\subset E$ denotes a compact and connected set which contains the nodes $z_k$ and $z_{k,\ell}$ for $k=0,\ldots,j$ and $\ell \in\mathbb{N}$ in its interior.
Then
\begin{equation}\label{eq:convergencegdd}
g_\ell[z_{0,\ell},\ldots,z_{j,\ell}] = g[z_0,\ldots,z_j]
+ \mathcal{O}(\|g_{\ell} -g \|_{K'}) + \sum_{k=0}^{j}\mathcal{O}(|z_{k,\ell} - z_k|),~~\ell\to \infty.
\end{equation}
\end{proposition}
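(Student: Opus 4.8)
The plan is to exploit the contour-integral definition \eqref{eq:defdd} of divided differences, using a single $\ell$-independent contour for both sequences so that the difference of the two divided differences becomes one contour integral amenable to direct estimation.

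First I would fix the contour. Since $z_{k,\ell}\to z_k$ and all the limit nodes lie in the interior of $K'$, the collection $\{z_k\}\cup\{z_{k,\ell}\}$ is contained in a compact subset of $\mathrm{int}(K')\subset E$. Around each distinct value among $z_0,\ldots,z_j$ I would place a circle of sufficiently small radius $\rho$ whose closed disk lies in $\mathrm{int}(K')$, chosen small enough that the circles are disjoint, and let $\Gamma$ be their union. For $\ell$ large enough every $z_{k,\ell}$ lies inside the circle surrounding its limit $z_k$ (this is the only place the hypothesis $z_{k,\ell}\to z_k$ enters the geometry), so $\Gamma$ encloses both node sets. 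Because the value in \eqref{eq:defdd} is independent of the enclosing contour, I may write both $g_\ell[z_{0,\ell},\ldots,z_{j,\ell}]$ and $g[z_0,\ldots,z_j]$ as integrals over this same $\Gamma$ and subtract them under a single integral sign.

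Writing $P_\ell(\zeta)=\prod_{k=0}^j(\zeta-z_{k,\ell})$ and $P(\zeta)=\prod_{k=0}^j(\zeta-z_k)$, I would split the resulting integrand as
\begin{equation*}
\frac{g_\ell(\zeta)}{P_\ell(\zeta)}-\frac{g(\zeta)}{P(\zeta)}
=\frac{g_\ell(\zeta)-g(\zeta)}{P_\ell(\zeta)}
+g(\zeta)\,\frac{P(\zeta)-P_\ell(\zeta)}{P_\ell(\zeta)P(\zeta)}.
\end{equation*}
On $\Gamma$ both $|P_\ell|$ and $|P|$ are bounded below by a positive constant uniformly in $\ell$ (the nodes stay at distance at least $\rho/2$ from $\Gamma$ for large $\ell$), while $\|g\|_\Gamma$ and the length of $\Gamma$ are fixed; hence the first term contributes $\mathcal{O}(\|g_\ell-g\|_\Gamma)=\mathcal{O}(\|g_\ell-g\|_{K'})$, using $\Gamma\subset K'$. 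For the second term I would expand the polynomial difference by the telescoping identity
\begin{equation*}
P(\zeta)-P_\ell(\zeta)=\sum_{k=0}^{j}\Big(\prod_{i<k}(\zeta-z_{i,\ell})\Big)(z_{k,\ell}-z_k)\Big(\prod_{i>k}(\zeta-z_i)\Big),
\end{equation*}
whose factors are all uniformly bounded on $\Gamma$, so that $|P(\zeta)-P_\ell(\zeta)|\le C\sum_k|z_{k,\ell}-z_k|$ uniformly on $\Gamma$; dividing by the bounded-below $|P_\ell P|$ and integrating yields the contribution $\sum_{k=0}^j\mathcal{O}(|z_{k,\ell}-z_k|)$. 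Adding the two contributions gives \eqref{eq:convergencegdd}.

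The computation itself is routine once the contour is fixed; the main thing to get right is the uniformity in $\ell$ of the lower bound on $|P_\ell(\zeta)|$ over $\Gamma$, which is precisely what forces the single, $\ell$-independent choice of $\Gamma$ and relies on the nodes lying strictly inside $K'$ together with the convergence $z_{k,\ell}\to z_k$. Since the asymptotic statement concerns only $\ell\to\infty$, the finitely many small indices for which a node might not yet lie inside $\Gamma$ are irrelevant.
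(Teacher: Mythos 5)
Your proposal is correct and follows essentially the same route as the paper: both represent the two divided differences as integrals over a single $\ell$-independent contour in $K'$, split the integrand difference into a $(g_\ell-g)/P_\ell$ term and a $g\,(P-P_\ell)/(P_\ell P)$ term, expand $P-P_\ell$ by the same telescoping identity, and use a uniform-in-$\ell$ lower bound on $|P_\ell|$ along the contour. The only cosmetic difference is your choice of $\Gamma$ as a union of small circles rather than a single enclosing contour, which changes nothing in the estimates.
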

\begin{proof} Consider the divided differences $g_\ell[z_{0,\ell},\ldots,z_{j,\ell}]$ and $g[z_0,\ldots,z_j]$ using the representation~\eqref{eq:defdd} for a closed contour $\Gamma$ in $K'$ which encloses the nodes $z_k$ and $z_{k,\ell}$ for $k=0,\ldots,j$ and $\ell \in\mathbb{N}$. In particular
\begin{subequations}\label{eq:1overprodzeta2term1}
\begin{align}
&|g_\ell[z_{0,\ell},\ldots,z_{j,\ell}] - g[z_0,\ldots,z_j]|\notag\\
&\qquad\qquad \leq \frac{1}{2\pi} \int_\Gamma \left|\frac{g_\ell(\zeta)}{(\zeta-z_{0,\ell})\cdots(\zeta-z_{j,\ell})} - \frac{g(\zeta)}{(\zeta-z_0)\cdots(\zeta-z_j)} \right|\,\mathrm{d}\zeta\notag\\
&\qquad\qquad\leq \frac{\|g_{\ell} -g \|_{K'}}{2\pi} \int_\Gamma \left|\frac{1}{(\zeta-z_{0,\ell})\cdots(\zeta-z_{j,\ell})} \right|\,\mathrm{d}\zeta\label{eq:1overprodzeta1}\\
&\qquad\qquad\quad + \frac{\|g\|_{K'}}{2\pi} \int_\Gamma \left| \frac{(\zeta-z_0)\cdots(\zeta-z_j)-(\zeta-z_{0,\ell})\cdots(\zeta-z_{j,\ell})}{(\zeta-z_{0,\ell})\cdots(\zeta-z_{j,\ell})(\zeta-z_0)\cdots(\zeta-z_j)} \right|\,\mathrm{d}\zeta.\label{eq:1overprodzeta2b}
\end{align}
\end{subequations}
The numerator in the integral in~\eqref{eq:1overprodzeta2b} expands to
\begin{align*}
&(\zeta-z_0)\cdots(\zeta-z_j)-(\zeta-z_{0,\ell})\cdots(\zeta-z_{j,\ell})\\
&\qquad\qquad = \sum_{k=0}^j \left((z_{k,\ell}-z_k) \prod_{i_1=0}^{k-1}(\zeta-z_{i_1})\prod_{i_2=k+1}^{j}(\zeta-z_{i_2,\ell})\right),
\end{align*}
where products over empty index sets correspond to $\prod_{i_1=0}^{-1}=\prod_{i_2=j+1}^{j}=1$.
Thus,
\begin{subequations}\label{eq:1overprodzeta2term2}
\begin{align}
&\int_\Gamma \left| \frac{(\zeta-z_0)\cdots(\zeta-z_j)-(\zeta-z_{0,\ell})\cdots(\zeta-z_{j,\ell})}{(\zeta-z_{0,\ell})\cdots(\zeta-z_{j,\ell})(\zeta-z_0)\cdots(\zeta-z_j)} \right|\,\mathrm{d}\zeta \\
& \leq  \sum_{k=0}^j \left( |z_{k,\ell}-z_k| \int_\Gamma  \prod_{i_1=k}^{j}\left|\zeta-z_{i_1}\right|^{-1} \prod_{i_2=0}^{k}\left|\zeta-z_{i_2,\ell}\right|^{-1}\,\mathrm{d}\zeta\right).\label{eq:1overprodzeta2}
\end{align}
\end{subequations}
Since the nodes $z_k$ are enclosed by the contour $\Gamma$ we may set $\gamma = \min_{k=0,\ldots,j} |\zeta-z_{k}|>0$ for all $\zeta\in\Gamma$. For a sufficiently large $\ell$ we may assume $|\zeta-z_{k,\ell}|>\gamma/2$ for all $\zeta\in\Gamma$, and consequently, the integral terms in~\eqref{eq:1overprodzeta1} and~\eqref{eq:1overprodzeta2} are bounded from above by $(2/\gamma)^{j+1}$ times the contour length. Combining~\eqref{eq:1overprodzeta2term2} with~\eqref{eq:1overprodzeta2term1} shows~\eqref{eq:convergencegdd}
\end{proof}

{\em Newton-Pad\'e approximation.}
In contrast to polynomial interpolation, solutions to rational interpolation problems may not exist in the classical sense of Hermite interpolation. To avoid difficulties with existence of rational interpolants, we consider rational interpolation in a linearized sense. 
The linearized interpolation problem for given $m$ and $n$ and nodes $z_0,\ldots,z_{m+n}$ consists of finding polynomials $p$ and $q$ of degree $m$ and $n$, respectively, s.t.
\begin{subequations}\label{eq:ratintlin0}
\begin{equation}\label{eq:errorlininterp}
(p-qf)[z_0,\ldots,z_j] = 0,~~~j=0,\ldots,m+n.
\end{equation}
Equivalently to~\eqref{eq:errorlininterp}, the linearized error for $p$ and $q$ has the form
\begin{equation}\label{eq:ratintcondconfluentlin2}
p(z) - q(z) f(z) = \widetilde{v}(z) \prod_{j=0}^{m+n}(z-z_j),
\end{equation}
\end{subequations}
where $\widetilde{v}$ is holomorphic on $E$, cf.~\cite[Section~1]{Cla78}. 
The problem~\eqref{eq:ratintlin0} always has at least one solution $p$ and $q$, since~\eqref{eq:errorlininterp} yields a system of equations with more degrees of freedom than conditions.
Solutions $p$ and $q$ of~\eqref{eq:ratintlin0} may not be unique. However, the quotient $r=p/q\in\mathcal{R}_{mn}$ where $p$ and $q$ solve~\eqref{eq:ratintlin0} is unique
(cf.~\cite{Wu75}), and $p/q$ is also referred to as {\em Newton-Pad\'e approximation} to $f$ for the interpolation nodes $z_0,\ldots,z_{m+n}\in E$.

We let rational interpolants be defined via the linearized interpolation problem in the sequel, and we proceed to use the terms Newton-Pad\'e approximation and rational interpolation equivalently.

\smallskip
{\em Non-degenerate rational functions.}
For given $m$ and $n$, the {\em defect} of $r\in\mathcal{R}_{mn}$ is defined as the maximal number $d\geq 0$ s.t.\ $r\in\mathcal{R}_{m-d,n-d}$. We refer to a rational function as {\em degenerate} if its defect is strictly larger than zero, i.e., $d>0$, and non-degenerate otherwise.

\begin{remark}\label{rmk:zjpoles}
Assume the Newton-Pad\'e approximant $r=p/q\in\mathcal{R}_{mn}$ is non-degenerate. In case of $q(z_j)=0$ for an index $j\in\{0,\ldots,m+n\}$, the identity~\eqref{eq:ratintcondconfluentlin2} implies $p(z_j)=0$ which is contrary to $p$ and $q$ having no common zeros. Thus, in the non-degenerate case the underlying interpolation nodes are distinct to the poles of $r$.
\end{remark}

\smallskip
{\em Rational Hermite interpolation.}
Let $\kappa\leq m+n+1$ denote the number of distinct nodes in $\{z_0,\ldots,z_{m+n}\}$, let $\iota$ denote a mapping s.t.\ the index $\iota(j)$ corresponds to the $j$th distinct node, and let $\eta_j\geq 1$ denote the multiplicity of the node $z_{\iota(j)}$ with $\eta_1+\ldots+\eta_k =m+n+1$. Thus, the underlying interpolation nodes are of the form
\begin{subequations}\label{eq:ratintcondconfluent}
\begin{equation}
\{z_0,\ldots,z_{m+n}\} = \{
\underbrace{z_{\iota(1)},\ldots,z_{\iota(1)}}_{\text{$\eta_1$ many}},
\underbrace{z_{\iota(2)},\ldots,z_{\iota(2)}}_{\text{$\eta_2$ many}},\ldots, \underbrace{z_{\iota(\kappa)},\ldots,z_{\iota(\kappa)}}_{\text{$\eta_\kappa$ many}}\}.
\end{equation}
The Hermite interpolation problem, or osculatory rational interpolation problem, consists of finding $r\in\mathcal{R}_{mn}$ s.t.
\begin{equation}\label{eq:ratintcondconfluent1}
r^{(\ell)}\left( z_{\iota(j)}\right) = 
f^{(\ell)}\left( z_{\iota(j)}\right),
~~~\ell=0,\ldots,\eta_j-1,~~~j=1,\ldots,\kappa.
\end{equation}
\end{subequations}
Contrary to the linearized interpolation problem, the Hermite interpolation problem may have no solution. We proceed to establish a relation between Hermite interpolation and the Newton-Pad\'e approximation.

Assume the Newton-Pad\'e approximant $r=p/q\in\mathcal{R}_{mn}$ is non-degenerate, then the numerator $p$ and denominator $q$ are uniquely defined via solutions of~\eqref{eq:ratintlin0} assuming a certain normalization, e.g., $q(0)=1$. Otherwise, in degenerate cases the numerator $p_0$ and denominator $q_0$ of the Newton-Pad\'e approximant in {\em irreducible} form, i.e., $r=p_0/q_0$ where $p_0$ and $q_0$ have no common zeros, may not solve~\eqref{eq:ratintlin0}.
Following~\cite[Theorem~1]{Wu75} and~\cite{Sa62,Cla78}, there exists a solution $r\in\mathcal{R}_{mn}$ to~\eqref{eq:ratintcondconfluent} if and only if the numerator and denominator of the Newton-Pad\'e approximant in irreducible form satisfy~\eqref{eq:ratintlin0}. In this case, the solution of~\eqref{eq:ratintcondconfluent} coincides with the Newton-Pad\'e approximant provided by~\eqref{eq:ratintlin0}. In particular, this is the case if the Newton-Pad\'e approximant $r\in\mathcal{R}_{mn}$ is non-degenerate.
Assume the non-degenerate case and let $E_0\subset E$ denote a set which contains no poles of $r$, then $r-f$ is holomorphic on $E_0$ and its first divided differences vanish as a consequence of~\eqref{eq:ratintcondconfluent1}, namely,
\begin{equation*}
(r-f)[z_0,\ldots,z_j]=0,~~~j=0,\ldots,m+n.
\end{equation*}
Moreover, the error of the Newton-Pad\'e approximant can be described similar to~\eqref{eq:ratintcondconfluentlin2}. Namely,
\begin{equation}\label{eq:errorinterp2}
r(z)-f(z) = v(z) \prod_{j=0}^{m+n}(z-z_j),~~~z\in E_0,
\end{equation}
where $v$ is holomorphic in $E_0$ and can be derived via $v=\widetilde{v}/q$ with $\widetilde{v}$ as in~\eqref{eq:ratintcondconfluentlin2}.

\begin{remark}\label{rmk:fromzerostonodes}
Assume that for a given $r\in\mathcal{R}_{mn}$ the difference $r-f$ has zeros at points $z_0,\ldots,z_{m+n}\in E$ counting multiplicity, then $r$ satisfies the interpolation conditions~\eqref{eq:ratintcondconfluent}. Consequently, $r$ is a Newton-Pad\'e approximant for the respective nodes.
\end{remark}

\subsection{Pad\'e approximation}
For a survey on Pad\'e approximation we refer to \cite{BG96}. The $(m,n)$-Pad\'e approximation can be understood as Newton-Pad\'e approximation to $f$ with interpolation nodes $z_0=\ldots=z_{m+n}=0$. Considering remarks on rational interpolation, the $(m,n)$-Pad\'e approximation $r^P = p^P/q^P\in\mathcal{R}_{mn}$ always exists in this sense, i.e., $p^P - q^P f = \widetilde{v}_0(z)z^{m+n+1}$ for some remainder $\widetilde{v}_0$ which is holomorphic on $E$. Assume $r^P$ is non-degenerate, then the Pad\'e approximant has no pole at the origin (cf.\ Remark~\ref{rmk:zjpoles}). In particular, there exists a disk $\Delta_0= \rho\Delta$, for a respective radius $\rho>0$, which contains no poles of $r^P$. Consequently, $r^P$ satisfies an error representation of the form~\eqref{eq:errorinterp2}, i.e.,
\begin{equation}\label{eq:Padeasymerror}
\begin{aligned}
r^P(z)-f(z) &=v^0(z)z^{m+n+1},&&z\in \Delta_0,\\
&= a_{mn} z^{m+n+1} + \mathcal{O}(z^{m+n+2}),~~~&&z\to 0,
\end{aligned}
\end{equation}
for some remainder $v^0$ which is holomorphic on $\Delta_0$ and $a_{mn} = v^0(0)$.

Let $c_j$ for $j\geq0$ refer to the coefficients of the Taylor series of $f$ at $z=0$, i.e., $c_j=f^{(j)}(0)/j!$ where $f^{(j)}(0)$ corresponds to the $j$th derivative of $f$ evaluated at zero. Using the convention $c_j=0$ for $j<0$, we use the notation $D_{m,n}^{0}(f)$ for the matrix determinant
\begin{equation*}
D_{m,n}^{0}(f) = \left|
\begin{pmatrix}
c_{m-n+1}& c_{m-n+2} & \cdots & c_m\\
c_{m-n+2}& c_{m-n+3} & \cdots & c_{m+1}\\
\vdots & \vdots & & \vdots \\
c_{m}& c_{m+1} & \cdots &  c_{m+n-1}\\
\end{pmatrix}
\right|.
\end{equation*}
The denominator $q^P$ of the Pad\'e approximation is referred to as $Q^{[m/n]}$ in~\cite{BG96}, and it has the matrix determinant representation \cite[eq.~(1.8)]{BG96}. In particular, 
\begin{equation*}
q^P(0) = D_{m,n}^{0}(f),
\end{equation*}
which is also referred to as Hankel determinant in the literature, cf.~\cite[page~7]{BG96}.
Various notation is used in literature to represent the Hankel determinant $D_{m,n}^{0}(f)$, for instance $(\Delta^{m,n}(f))_{x=0}$ in~\cite{Me67} (as defined on page~169 therein) and $D_{m,n}(f)$ in \cite{GJ76} (choosing the nodes $z_j=0$ therein).

\begin{remark}\label{rmk:DmnfPade}
For the denominator $q^P$ of the Pad\'e approximation, the non-degenerate case implies $q^P(0)\neq 0$ (cf.\ Remark~\ref{rmk:zjpoles}), i.e., $D_{m,n}^{0}(f)\neq 0$.
\end{remark}

\begin{remark}
Similar to~\eqref{eq:Padeasymerror}, we may consider the linearized asymptotic error
\begin{equation}\label{eq:Padeasymerrorlin}
p^P(z) - q^P(z) f(z) = a_{mn} q^P(0) z^{m+n+1} + \mathcal{O}(z^{m+n+2}),~~~z\to 0.
\end{equation}
The error representation~\cite[eq.~(1.11)]{BG96} shows 
\begin{equation*}
p^P(z) - q^P(z) f(z) = -D_{m+1,n+1}^{0}(f) z^{m+n+1} +  \mathcal{O}(z^{m+n+2}),~~~z\to 0.
\end{equation*}
For the non-degenerate case, combining this with~\eqref{eq:Padeasymerrorlin} and $q^P(0) = D_{m,n}^{0}(f)$ we observe
\begin{equation}\label{eq:amnformula}
a_{mn} = -D_{m+1,n+1}^{0}(f) /D_{m,n}^{0}(f).
\end{equation}
\end{remark}

In general, the coefficient $a_{mn}$ can be computed via~\eqref{eq:amnformula}. For the Pad\'e approximation of the exponential function $f=\exp$, an explicit formula is known for the coefficient $a_{mn}$, cf.~\cite[eq.~(10.24)]{Hi08},
\begin{equation}\label{eq:amnexp}
a_{mn} = \frac{(-1)^{n+1} m!n!}{(m+n)!(m+n+1)!},~~~\text{for $f=\exp$}.
\end{equation}

\section{Continuity of rational interpolants}\label{sec:continuity}

Convergence of rational functions in a non-degenerate setting has some relevance in the present work. In particular, sequences of rational approximants $\{r^\varepsilon\}_{\varepsilon>0}$, where $r^\varepsilon \in\mathcal{R}_{mn}$, which converge to a non-degenerate rational function $r\in\mathcal{R}_{mn}$. In the present work, the most relevant types of convergence are
\begin{enumerate}
\item[(au)] ``almost uniform'' convergence, i.e., $\| r^\varepsilon - r \|_{K'} \to 0$ for every compact set $K'\subset \mathbb{C}$ which contains no poles of $r$, and
\item[(cw)] ``coefficient-wise'' convergence, i.e., the coefficients of the numerator and denominator of $r^\varepsilon$ converge to the respective coefficients of $r$.
\end{enumerate}
Under the assumption that $r$ is non-degenerate these types of convergence are equivalent, cf.~\cite[Theorem~1]{TG85} or~\cite[Theorem~1]{Gu90}. Since we only consider non-degenerate cases in the present work, the notation $r^\varepsilon\to r$ refers to convergence almost uniform and coefficient-wise in an equivalent manner.

Partly based on the convergence result in~\cite[Theorem~9]{GJ76}, we show the following auxiliary results.
\begin{proposition}\label{prop:cwconvergence}
Assume the $(m,n)$-Pad\'e approximant $r^P$ to $f$ is non-degenerate.
Let $\{(z_0(\varepsilon),\ldots,z_{m+n}(\varepsilon))\}_{\varepsilon>0}$ denote a sequence of nodes with $z_j(\varepsilon)\in E$, and let $\{r^\varepsilon\}_{\varepsilon>0}$ denote the respective sequence of Newton-Pad\'e approximants, i.e., $r^\varepsilon\in\mathcal{R}_{mn}$ is a rational interpolant to $f$ with interpolation nodes $z_0(\varepsilon),\ldots,z_{m+n}(\varepsilon)$. Assume that $z_j(\varepsilon)\to 0$ for $j=0,\ldots,m+n$ and $\varepsilon \to 0$, then the following statements hold true.
\begin{enumerate}[label=(\roman*)]
\item\label{item:convreps} The interpolant $r^\varepsilon$ converges to the Pad\'e approximant, i.e., $r^\varepsilon\to r^P$ for $\varepsilon\to 0$.
\item\label{item:repsnondeg} For a sufficiently small $\varepsilon$, the interpolant $r^{\varepsilon}$ is non-degenerate.
\item\label{item:repspoles} If $\Delta_0\subset E$ denotes a disk which contains no poles of $r^P$, then the interpolant $r^\varepsilon$ has no poles on $\Delta_0$ for a sufficiently small $\varepsilon$.
\item\label{item:repserror} For a fixed disk $\Delta_0$ as above and sufficiently small $\varepsilon$, the interpolant $r^\varepsilon$ satisfies
\begin{equation}\label{eq:asymerrorinterp0}
r^\varepsilon(z)-f(z) = v^\varepsilon(z) \prod_{j=0}^{m+n}(z-z_j(\varepsilon)),~~~z\in \Delta_0,
\end{equation}
for a remainder $v^\varepsilon$ which is holomorphic on $\Delta_0$.
\item\label{item:amncont0}
The remainder $v^\varepsilon$ satisfies
\begin{equation}\label{eq:vepsOe0}
v^\varepsilon(\varepsilon z)
= a_{mn} + \sum_{j=0}^{m+n} \mathcal{O}\left(|z_j(\varepsilon)|\right) + 
\mathcal{O}(\varepsilon |z|),~~~\text{for $\varepsilon\to 0$,}
\end{equation}
with $a_{mn}=v^0(0)$ as in~\eqref{eq:Padeasymerror}.
\end{enumerate}
\end{proposition}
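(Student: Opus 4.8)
The plan is to dispatch (i)–(iv) by the classical continuity theory of rational interpolation and to reserve the actual work for the quantitative estimate (v). For (i) I would read the linearized conditions~\eqref{eq:errorlininterp} as a linear system for the coefficients of $p^\varepsilon,q^\varepsilon$ whose entries are the divided differences of the fixed holomorphic functions $z^k$ and $z^k f$ evaluated at the nodes $z_0(\varepsilon),\ldots,z_{i}(\varepsilon)$. Applying Proposition~\ref{prop:ddcont} to these fixed functions (so that the $\|g_\ell-g\|$ term is absent) shows that every entry converges, as $\varepsilon\to0$, to the corresponding Taylor-coefficient entry of the Pad\'e system, and moreover at the rate $\sum_{j}\mathcal{O}(|z_j(\varepsilon)|)$. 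Non-degeneracy of $r^P$ gives $q^P(0)=D_{m,n}^{0}(f)\neq0$ (Remark~\ref{rmk:DmnfPade}), which is exactly the condition that the limiting Pad\'e system has a unique normalized (e.g.\ $q^\varepsilon(0)=1$) solution; by Cramer's rule the normalized coefficients of $p^\varepsilon,q^\varepsilon$ are then rational functions of the entries with nonvanishing limiting denominator, so they converge coefficient-wise to those of $p^P,q^P$. This is the convergence of~\cite[Theorem~9]{GJ76} and yields~\ref{item:convreps}.

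For~\ref{item:repsnondeg}, non-degeneracy is an open condition: it is governed by nonvanishing of the relevant Hankel-type determinant, which converges to $D_{m,n}^{0}(f)\neq0$, hence stays nonzero for small $\varepsilon$. For~\ref{item:repspoles}, since $\Delta_0$ is compact and $q^P$ has no zero there, $\min_{z\in\Delta_0}|q^P(z)|>0$; coefficient-wise convergence gives $q^\varepsilon\to q^P$ uniformly on $\Delta_0$, so $q^\varepsilon$ is bounded away from $0$ on $\Delta_0$ for small $\varepsilon$ and $r^\varepsilon$ is pole-free there. I would also record at this point the rate needed later: combining the coefficient estimates of the previous paragraph with the uniform lower bound on $|q^\varepsilon|,|q^P|$ over a fixed pole-free compact set $K'\subset\Delta_0$ containing the origin in its interior yields $\|r^\varepsilon-r^P\|_{K'}=\sum_{j=0}^{m+n}\mathcal{O}(|z_j(\varepsilon)|)$. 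Statement~\ref{item:repserror} is then immediate, since~\ref{item:repsnondeg}–\ref{item:repspoles} place $r^\varepsilon$ in the non-degenerate, pole-free situation of~\eqref{eq:errorinterp2} with $E_0=\Delta_0$, giving the representation~\eqref{eq:asymerrorinterp0} with holomorphic $v^\varepsilon$.

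The core is~\ref{item:amncont0}. The key identity is that $v^\varepsilon$ is recovered as a divided difference with one extra node: choosing a contour $\Gamma\subset\Delta_0$ enclosing all nodes and $\varepsilon z$, substituting~\eqref{eq:asymerrorinterp0} into the integral representation~\eqref{eq:defdd} collapses the product $\prod_j(\zeta-z_j(\varepsilon))$, and Cauchy's formula gives
\[
v^\varepsilon(\varepsilon z) = (r^\varepsilon-f)[z_0(\varepsilon),\ldots,z_{m+n}(\varepsilon),\varepsilon z].
\]
All $m+n+2$ nodes tend to $0$, and by~\ref{item:repspoles} the functions $r^\varepsilon-f$ converge uniformly on $K'$ to $r^P-f$. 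Applying Proposition~\ref{prop:ddcont} with $g_\ell=r^\varepsilon-f$ and $g=r^P-f$ then gives $v^\varepsilon(\varepsilon z)=(r^P-f)[0,\ldots,0]+\mathcal{O}(\|r^\varepsilon-r^P\|_{K'})+\sum_{j}\mathcal{O}(|z_j(\varepsilon)|)+\mathcal{O}(|\varepsilon z|)$. The coincident divided difference $(r^P-f)[0,\ldots,0]$ over $m+n+2$ points equals the $(m+n+1)$-th Taylor coefficient of $r^P-f$, which by~\eqref{eq:Padeasymerror} is exactly $a_{mn}$; the term $\mathcal{O}(\|r^\varepsilon-r^P\|_{K'})$ is absorbed into $\sum_{j}\mathcal{O}(|z_j(\varepsilon)|)$ by the rate recorded above; and $\mathcal{O}(|\varepsilon z|)=\mathcal{O}(\varepsilon|z|)$. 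This is precisely~\eqref{eq:vepsOe0}.

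The main obstacle is not the qualitative convergence, which is classical, but securing the rate $\|r^\varepsilon-r^P\|_{K'}=\sum_{j}\mathcal{O}(|z_j(\varepsilon)|)$: the target~\eqref{eq:vepsOe0} carries no $\|r^\varepsilon-r^P\|$ error term, whereas a naive use of Proposition~\ref{prop:ddcont} leaves exactly such a term. This forces the careful, Cramer's-rule-level bookkeeping of how the interpolant's coefficients depend on the divided-difference entries, using non-degeneracy ($D_{m,n}^{0}(f)\neq0$) to keep the governing determinant bounded away from $0$, so that the linear-in-node-perturbation bounds of Proposition~\ref{prop:ddcont} propagate first to the coefficients and then, via the uniform lower bound on the denominators, to the sup norm on $K'$. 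A secondary point to check is the legitimacy of the contour argument for $v^\varepsilon$ when $\varepsilon z$ happens to collide with a node $z_j(\varepsilon)$, which is harmless since the product in the integrand still cancels.
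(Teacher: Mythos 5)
Your argument is correct and reaches the same conclusions, but the route to the key estimate \ref{item:amncont0} is genuinely different from the paper's. The paper takes the explicit determinant representations $q^\varepsilon(\varepsilon z)=\det H^\varepsilon(\varepsilon z)$ and $\widetilde{v}^\varepsilon(\varepsilon z)=-\det E^\varepsilon(\varepsilon z)$ from \cite[Theorem~7.1.1]{BG96}, writes $v^\varepsilon=\widetilde{v}^\varepsilon/q^\varepsilon$, and bounds the two determinant perturbations separately via the determinant perturbation bound of \cite{IR08} together with Proposition~\ref{prop:ddcont} applied to the \emph{fixed} function $f$ at moving nodes; that way no error term of the form $\mathcal{O}(\|r^\varepsilon-r^P\|)$ ever enters. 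You instead identify $v^\varepsilon(\varepsilon z)=(r^\varepsilon-f)[z_0(\varepsilon),\ldots,z_{m+n}(\varepsilon),\varepsilon z]$ (a correct and rather elegant observation) and apply Proposition~\ref{prop:ddcont} to the \emph{moving} functions $r^\varepsilon-f$, which leaves an $\mathcal{O}(\|r^\varepsilon-r^P\|_{K'})$ term that must then be upgraded to $\sum_j\mathcal{O}(|z_j(\varepsilon)|)$ by the Cramer's-rule bookkeeping you describe; you correctly flag this as the crux, and it does go through since the entries of the governing determinants are divided differences of $f$ converging at exactly that rate while the limiting determinant $D_{m,n}^{0}(f)$ is nonzero. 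In effect you trade the determinant-perturbation lemma for a quantitative version of part \ref{item:convreps}; both routes rest on the same two ingredients (the determinant formulas and Proposition~\ref{prop:ddcont}). Two points worth tightening: Proposition~\ref{prop:ddcont} is stated for functions holomorphic on all of $E$, whereas $r^\varepsilon-f$ is holomorphic only on $\Delta_0$, so you should note that its proof uses holomorphy only near the contour; and in \ref{item:repsnondeg} your claim that nonvanishing of the divided-difference Hankel determinant by itself forces non-degeneracy of the Newton--Pad\'e interpolant is a block-structure fact that needs a citation or argument -- the paper instead deduces \ref{item:repsnondeg} by tracking the zeros and poles of $r^\varepsilon$ under coefficient-wise convergence to the non-degenerate $r^P$, which sidesteps this issue.
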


\begin{proof} We first prove~\ref{item:convreps} by applying~\cite[Theorem~9]{GJ76}. This requires the condition in~\cite[Theorem~9.(B)]{GJ76} and $D_{m,n}(f)\neq 0 $ in~\cite[eq.~(4.16)]{GJ76} to hold true. The rational interpolant for the nodes $z_j=0$ for $j=0,\ldots,m+n$ corresponds to the Pad\'e approximant $r^P$. The term $D_{m,n}(f)$ for $z_j=0$ refers to the underlying Hankel determinant, and following Remark~\ref{rmk:DmnfPade} we have $D_{m,n}(f)\neq 0$ in the non-degenerate case. Moreover,~\cite[Theorem~9.(B)]{GJ76} requires $r^P$ to solve the linearized interpolation problem in irreducible form which holds true for the non-degenerate case as well. Thus, we may apply~\cite[Theorem~9]{GJ76} to show point-wise convergence of $r^\varepsilon$ to $r^P$ on any compact sub-set of $\mathbb{C}$ which contains no poles of $r^P$. Consequently, $r^\varepsilon\to r^P$ ``almost uniform'' when the underlying interpolation nodes converge to the origin $z_j(\varepsilon)\to 0$ which proves the assertion~\ref{item:convreps}.

We proceed with the proof of~\ref{item:repsnondeg}. Due to the assumption that $r^P$ is non-degenerate, i.e., $r^P\not\in \mathcal{R}_{m-d,n-d}$ for $d>0$, the Pad\'e approximant has either exactly $m$ zeros or exactly $n$ poles in $\mathbb{C}$, and its zeros and poles are distinct. This carries over from $r^P$ to $r^\varepsilon$ as following. Assume $r^P$ has exactly $m$ zeros, then coefficient-wise convergence $r^\varepsilon\to r^P$ implies that the zeros of $r^\varepsilon$ converge to the zeros of $r^P$ (up to ordering, cf.~\cite[Proposition~5.2.1]{Ar11}), and the poles of $r^\varepsilon$ either converge to the poles of $r^P$ or to $\infty$, where the latter occurs if $r^P$ has $<n$ poles. Thus, for sufficiently small $\varepsilon$, the rational function $r^\varepsilon$ has exactly $m$ zeros and its zeros and poles are distinct, i.e. $r^\varepsilon\not\in \mathcal{R}_{m-d,n-d}$ for $d>0$. Similar arguments hold true for the case that $r^P$ has exactly $n$ poles, which proves~\ref{item:repsnondeg}.

Proof of~\ref{item:repspoles}. Coefficient-wise convergence $r^\varepsilon\to r^P$ implies that the poles of $r^\varepsilon$ either converge to the poles of $r^P$ or to $\infty$, and since the disk $\Delta_0$ contains no poles of $r^P$ and is compact, this proves the assertion of~\ref{item:repspoles}.

Proof of~\ref{item:repserror}. Let $\Delta_0\subset E$ denote a disk set s.t.\ $r^P$ has no poles on $\Delta_0$. Following~\ref{item:repsnondeg} and~\ref{item:repspoles}, the interpolant $r^\varepsilon$ is non-degenerate and has no poles on $\Delta_0$ for a sufficiently small $\varepsilon$. Thus, in this case arguments around~\eqref{eq:errorinterp2} imply that a representation of the form~\eqref{eq:asymerrorinterp0} holds true.

Proof of~\ref{item:amncont0}. Similar to~\eqref{eq:ratintcondconfluentlin2}, the rational interpolant $r^\varepsilon = p^\varepsilon/q^\varepsilon$ with remainder $\widetilde{v}^\varepsilon$ satisfies the linearized problem
\begin{equation}\label{eq:vepslinerr}
p^\varepsilon(z) - q^\varepsilon(z) f(z) = \widetilde{v}^\varepsilon(z) \prod_{j=0}^{m+n}(z-z_j(\varepsilon)).
\end{equation}
Explicit representations of $p^\varepsilon$, $q^\varepsilon$ and $\widetilde{v}^\varepsilon$ subject to~\eqref{eq:vepslinerr} are provided in~\cite[Theorem~7.1.1]{BG96}.
Namely, define
\begin{equation*}
f^\varepsilon_{\ell,k} = f[z_\ell(\varepsilon),\ldots,z_k(\varepsilon)],~~~\ell\leq k,
\end{equation*}
and $f^\varepsilon_{\ell,k} =0$ for $k<\ell$, and define the matrices
\begin{subequations}\label{eq:defHEqv}
\begin{equation}
\begin{aligned}
H^\varepsilon(z) = & \begin{pmatrix}
f^\varepsilon_{n,m+1}& f^\varepsilon_{n,m+2} & \cdots & f^\varepsilon_{n,m+n} & 
\prod_{k=0}^{n-1}(z-z_k(\varepsilon))\\
f^\varepsilon_{n-1,m+1}& f^\varepsilon_{n-1,m+2} & \cdots & f^\varepsilon_{n-1,m+n} &
\prod_{k=0}^{n-2}(z-z_k(\varepsilon))\\
\vdots & \vdots & & \vdots & \vdots \\
f^\varepsilon_{0,m+1}& f^\varepsilon_{0,m+2} & \cdots &  f^\varepsilon_{0,m+n} & 
1\\
\end{pmatrix}\\
&\in\mathbb{C}^{n+1\times n+1},
\end{aligned}
\end{equation}
and
\begin{equation}
\begin{aligned}
E^\varepsilon(z) = &
\begin{pmatrix}
f^\varepsilon_{n,m+1}& f^\varepsilon_{n,m+2} & \cdots & f^\varepsilon_{n,m+n} & f[z_n(\varepsilon),\ldots,z_{m+n}(\varepsilon),z]\\
f^\varepsilon_{n-1,m+1}& f^\varepsilon_{n-1,m+2} & \cdots & f^\varepsilon_{n-1,m+n} & f[z_{n-1}(\varepsilon),\ldots,z_{m+n}(\varepsilon),z]\\
\vdots & \vdots & & \vdots & \vdots \\
f^\varepsilon_{0,m+1}& f^\varepsilon_{0,m+2} & \cdots &  f^\varepsilon_{0,m+n} & f[z_0(\varepsilon),\ldots,z_{m+n}(\varepsilon),z]\\
\end{pmatrix}\\
&\in\mathbb{C}^{n+1\times n+1}.
\end{aligned}
\end{equation}
Then, following~\cite[Theorem~7.1.1]{BG96}, the denominator $q^\varepsilon$ and the remainder $\widetilde{v}^\varepsilon$ correspond to the matrix determinants
\begin{equation}
q^\varepsilon(\varepsilon z) = \det H^\varepsilon(\varepsilon z)
~~~\text{and}~~
\widetilde{v}^\varepsilon(\varepsilon z) = - \det E^\varepsilon(\varepsilon z).
\end{equation}
\end{subequations}
For a given $z\in\mathbb{C}$ and sufficiently small $\varepsilon$ s.t.\ $\varepsilon z\in\Delta_0$ and $q(\varepsilon z)\neq 0$, this yields a representation of the remainder $v^\varepsilon$ in~\eqref{eq:asymerrorinterp0}, i.e.,
\begin{equation*}
v^\varepsilon(\varepsilon z) = \widetilde{v}^\varepsilon(\varepsilon z)/q^\varepsilon(\varepsilon z).
\end{equation*}
This representation remains true for the Pad\'e approximation with $\varepsilon=0$, i.e., $q^0=q^P$ and $v^0 = \widetilde{v}^0/q^0$ for $v^0$ as in~\eqref{eq:Padeasymerror}. In particular, 
\begin{equation*}
a_{mn} = v^0(0)=\widetilde{v}^0(0)/q^0(0).
\end{equation*}
To prove~\eqref{eq:vepsOe0}, we consider the difference
\begin{align}
v^\varepsilon(\varepsilon z)-a_{mn}
& = \widetilde{v}^\varepsilon(\varepsilon z)/q^\varepsilon(\varepsilon z)
- \widetilde{v}^0(0)/q^0(0)\notag\\
& = \frac{\left(\widetilde{v}^\varepsilon(\varepsilon z)-\widetilde{v}^0(0)\right) q^0(0) - \widetilde{v}^0(0)\left(q^\varepsilon(\varepsilon z)-q^0(0)\right)}{q^0(0)q^\varepsilon(\varepsilon z)}. \label{eq:vepsmamnfrac}
\end{align}
We proceed by showing
\begin{subequations}\label{eq:vepsminsamnOnosubs}
\begin{equation}\label{eq:vepsminsamnOnov}
q^\varepsilon(\varepsilon z)-q^0(0) = \sum_{j=0}^{m+n} \mathcal{O}\left(|z_j(\varepsilon)|\right) + \mathcal{O}(\varepsilon|z|) ,~~~\text{for $\varepsilon\to 0$},
\end{equation}
and
\begin{equation}\label{eq:vepsminsamnOnoq}
\widetilde{v}^\varepsilon(\varepsilon z)-\widetilde{v}^0(0)
= \sum_{j=0}^{m+n} \mathcal{O}\left(|z_j(\varepsilon)|\right) + \mathcal{O}(\varepsilon|z|) ,~~~\text{for $\varepsilon\to 0$},
\end{equation}
\end{subequations}
by using the representation~\eqref{eq:defHEqv} and properties of matrix determinants.

We first show~\eqref{eq:vepsminsamnOnov}. Following \cite[Corollary~2.7]{IR08}, the difference of $ q^\varepsilon(\varepsilon z)=\det H^\varepsilon(\varepsilon z) $ and $q^0(0)=\det H^0(0)$ satisfies
\begin{equation}\label{eq:detHdiff}
|\det H^\varepsilon(\varepsilon z)-\det H^0(0)|\leq \sum_{j=1}^{n+1}
s_{n+1-j} \|H^\varepsilon(\varepsilon z)-H^0(0)\|_2^j,
\end{equation}
where $s_1,\ldots,s_{n+1}$ are elementary symmetric functions in the singular values of $H^0(0)$ as defined in this reference, and $\|\cdot\|_2$ therein refers to the Euclidean matrix norm.
Moreover, the Euclidean matrix norm therein is bounded by, cf.\ \cite[Table~6.2]{Hi02},
\begin{equation}\label{eq:detH2normest}
\|H^\varepsilon(\varepsilon z)-H^0(0)\|_2 \leq \sum_{\ell=1}^{n+1}\sum_{k=1}^{n+1} |(H^\varepsilon(\varepsilon z))_{\ell k}-(H^0(0))_{\ell k}|.
\end{equation}
We proceed to derive asymptotic estimates for the difference of the matrix entries therein.
For $k=1,\ldots,n$ and $\ell =1,\ldots,n+1$, i.e., the first $n$ columns, Proposition~\ref{prop:ddcont} implies
\begin{equation}\label{eq:Hlkfirst}
\begin{aligned}
(H^\varepsilon(\varepsilon z))_{\ell k}-(H^0(0))_{\ell k}
&= f[z_{n-\ell+1}(\varepsilon),\ldots,z_{m+k}(\varepsilon)]
- f[0,\ldots,0]\\
&=\sum_{j=0}^{m+n} \mathcal{O}\left(|z_j(\varepsilon)|\right)  ,~~~\text{for $\varepsilon\to 0$}.
\end{aligned}
\end{equation}
The matrix entries in the last column of $H$, i.e., $k=n+1$, satisfy
\begin{subequations}\label{eq:Hlastcolumnest}
\begin{equation}\label{eq:Hlastcolumn}
(H^\varepsilon(\varepsilon z))_{\ell k}-(H^0(0))_{\ell k}
=\left\{\begin{array}{ll}
\prod_{j=0}^{n-\ell+1}(\varepsilon z-z_j(\varepsilon)),~&\ell=1,\ldots,n,~\text{and}\\
0,&\ell=n+1.
\end{array}\right.
\end{equation}
For the case $\ell=1,\ldots,n$, the difference in~\eqref{eq:Hlastcolumn} is bounded by
\begin{equation}
\begin{aligned}
\prod_{j=0}^{n-\ell+1}|\varepsilon z-z_j(\varepsilon)|
&\leq 2^{n-\ell+2}\left((\varepsilon |z|)^{n-\ell+2} + \max_{i=0,\ldots,j}|z_i(\varepsilon)|^{n-\ell+2}\right)\\
&= \sum_{j=0}^{m+n} \mathcal{O}\left(|z_j(\varepsilon)|\right) + \mathcal{O}(\varepsilon|z|) ,~~~\text{for $\varepsilon\to 0$}.
\end{aligned}
\end{equation}
\end{subequations}
Combining~\eqref{eq:detH2normest},~\eqref{eq:Hlkfirst} and~\eqref{eq:Hlastcolumnest}, we arrive at
\begin{equation*}
\|H^\varepsilon(\varepsilon z)-H^0(0)\|_2 = \sum_{j=0}^{m+n} \mathcal{O}\left(|z_j(\varepsilon)|\right) + \mathcal{O}(\varepsilon|z|),~~~\text{for $\varepsilon\to 0$}.
\end{equation*}
Since higher powers of the Euclidean matrix norm in~\eqref{eq:detHdiff} satisfy the same asymptotic bound, this shows~\eqref{eq:vepsminsamnOnov}. 

We proceed to show~\eqref{eq:vepsminsamnOnoq} analogously. 
The difference $(E^\varepsilon(\varepsilon z))_{\ell k}-(E^0(0))_{\ell k}$ satisfies~\eqref{eq:Hlkfirst} for $k=1,\ldots,n$ and $\ell =1,\ldots,n+1$ since the first $n$ columns of $E^\varepsilon$ and $H^\varepsilon$ are identical. Following Proposition~\ref{prop:ddcont}, entries of the last column, i.e., $k=n+1$, satisfy
\begin{equation*}
\begin{aligned}
(E^\varepsilon(\varepsilon z))_{\ell k}-(E^0(0))_{\ell k}
&= f[z_{n-\ell+1}(\varepsilon),\ldots,z_{m+k}(\varepsilon),\varepsilon z]
- f[0,\ldots,0]\\
&= \sum_{j=0}^{m+n} \mathcal{O}\left(|z_j(\varepsilon)|\right) + \mathcal{O}(\varepsilon|z|) ,~~~\text{for $\varepsilon\to 0$}.
\end{aligned}
\end{equation*}
Using estimates similar to~\eqref{eq:detHdiff} and~\eqref{eq:detH2normest} for the difference of $\widetilde{v}^\varepsilon(\varepsilon z) = -\det E^\varepsilon(\varepsilon z)$ and $\widetilde{v}^0(0) = -\det E^0(0)$, we conclude~\eqref{eq:vepsminsamnOnoq}.
Combining~\eqref{eq:vepsmamnfrac} with~\eqref{eq:vepsminsamnOnosubs}, where the denominator in~\eqref{eq:vepsmamnfrac} can be further bounded, we conclude~\eqref{eq:vepsOe0}.
\end{proof}

The following auxiliary result holds true when considering convergence to the Pad\'e approximant.
\begin{proposition}\label{prop:zerosreps}
Assume the $(m,n$)-Pad\'e approximant $r^P$ to $f$ is non-degenerate with $a_{mn}\neq 0$, and let $\{r^\varepsilon\}_{\varepsilon>0}$ denote a sequence of $(m,n)$-rational functions with $r^\varepsilon\to r^P$ as $\varepsilon\to 0$. Then there exists $\alpha>0$ s.t.\ for sufficiently small $\varepsilon$, the disk $\alpha\Delta$ contains exactly $m+n+1$ zeros of $r^\varepsilon-f$ counting multiplicity. Moreover, the zeros of $r^\varepsilon-f$ converge to the origin for $\varepsilon\to 0$.
\end{proposition}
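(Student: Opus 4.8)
The plan is to apply Rouch\'e's theorem to $r^\varepsilon - f$ and $r^P - f$ on a suitably chosen small disk. First I would fix the disk. Since $r^P$ is non-degenerate, the discussion around~\eqref{eq:Padeasymerror} provides a disk $\Delta_0 = \rho\Delta \subset E$ containing no poles of $r^P$, together with the representation $r^P(z) - f(z) = v^0(z)\, z^{m+n+1}$ on $\Delta_0$, where $v^0$ is holomorphic and $v^0(0) = a_{mn} \neq 0$. By continuity of $v^0$ I can pick $\alpha \in (0,\rho)$ small enough that $v^0$ is nonzero on the closed disk $\overline{\alpha\Delta}$. Then $r^P - f$ has a zero of multiplicity exactly $m+n+1$ at the origin and no further zero in $\overline{\alpha\Delta}$; in particular $\delta := \min_{z\in\partial(\alpha\Delta)} |r^P(z) - f(z)| > 0$.

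Next I would control the poles of $r^\varepsilon$. Because $r^\varepsilon \to r^P$ coefficient-wise and $r^P$ is non-degenerate, the poles of $r^\varepsilon$ converge to the poles of $r^P$ or to $\infty$, exactly as in the proof of Proposition~\ref{prop:cwconvergence}\ref{item:repspoles}. Since $\overline{\alpha\Delta}$ is compact and free of poles of $r^P$, for sufficiently small $\varepsilon$ the interpolant $r^\varepsilon$ has no poles on $\overline{\alpha\Delta}$, so both $r^\varepsilon - f$ and $r^P - f$ are holomorphic there. Almost uniform convergence then yields $\|(r^\varepsilon - f) - (r^P - f)\|_{\overline{\alpha\Delta}} = \|r^\varepsilon - r^P\|_{\overline{\alpha\Delta}} \to 0$, so for $\varepsilon$ small enough this quantity is strictly less than $\delta \leq |r^P - f|$ on $\partial(\alpha\Delta)$. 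By Rouch\'e's theorem, $r^\varepsilon - f$ and $r^P - f$ have the same number of zeros in $\alpha\Delta$ counting multiplicity, namely $m+n+1$, which proves the first assertion.

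For the convergence of the zeros to the origin I would repeat the argument with arbitrarily small radii. The construction of $\alpha$ applies verbatim to any $\alpha' \in (0,\alpha]$: the function $r^P - f$ is nonzero on $\partial(\alpha'\Delta)$ and carries its $m+n+1$ zeros within $\alpha'\Delta$. Hence, given any $\alpha' > 0$, there is $\varepsilon_0 > 0$ such that for $\varepsilon < \varepsilon_0$ all $m+n+1$ zeros of $r^\varepsilon - f$ lying in $\alpha\Delta$ are already contained in $\alpha'\Delta$, which gives convergence to the origin. Equivalently, this is a direct consequence of Hurwitz's theorem.

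The only genuinely technical point, and the step I expect to require the most care, is ensuring that $r^\varepsilon - f$ is holomorphic on the closed disk, i.e.\ that the poles of $r^\varepsilon$ are kept away from $\overline{\alpha\Delta}$ for small $\varepsilon$; this rests on the equivalence of coefficient-wise and almost uniform convergence in the non-degenerate case. Once holomorphy on $\overline{\alpha\Delta}$ is secured, the Rouch\'e and Hurwitz arguments are entirely routine.
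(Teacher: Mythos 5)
Your proposal is correct and follows essentially the same route as the paper: the paper also fixes a small disk on which $r^P-f$ has only the order-$(m+n+1)$ zero at the origin and no poles, transfers the zero count to $r^\varepsilon-f$ via uniform closeness on the boundary circle (phrased there through winding numbers and the argument principle, which is exactly your Rouch\'e step), and then shrinks the radius to get convergence of the zeros. The only cosmetic difference is that you isolate the zeros of $r^P-f$ via the factorization $v^0(z)z^{m+n+1}$ with $v^0(0)=a_{mn}\neq 0$, whereas the paper does this by the same winding-number computation.
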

\begin{proof}
By construction, the error of the non-degenerate Pad\'e approximation has a zero at $z=0$ of multiplicity $m+n+1$. We proceed to show that on a sufficiently small disk $\alpha \Delta$, the error $r^P-f$ attains no further zeros. For a sufficiently small $\alpha>0$, the error curve $r^P(z)-f(z)=a_{mn}z^{m+n+1}+\mathcal{O}(z^{m+n+2})$ for $|z|=\alpha$ has a winding number of $m+n+1$. Let $\Delta_0\subset E$ denote a disk which contains no poles of $r^P$, and assume $\alpha\Delta\subset \Delta_0$ which certainly holds true for sufficiently small $\alpha$. Consequently, $r^P-f$ has no poles on $\alpha \Delta$. Following the argument principle, the number of zeros of $r^P-f$ in $\alpha\Delta$ corresponds to the winding number of $r^P(z)-f(z)$ for $|z|=\alpha$, i.e., $r^P(z)-f(z)$ has exactly $m+n+1$ zeros counting multiplicity in $\alpha \Delta$.

Let $\alpha$ be fixed as above. Define $\beta:=\min_{|z|=\alpha} |r^P(z)-f(z)|>0$ which is strictly positive since the only zeros of $r^P-f$ on $\alpha\Delta$ are at $z=0$. Since $\alpha\Delta\subset \Delta_0$ the convergence $r^\varepsilon\to r^P$ can be understood as uniform convergence on the disk $\alpha\Delta$.
For a sufficiently small $\varepsilon$, we have $|r^\varepsilon(z) - r^P(z)| < \beta/2$ for $|z|=\alpha$, and thus, $r^\varepsilon(z)-f(z)$ also has a winding number of $m+n+1$ for $|z|=\alpha$. Moreover, for a sufficiently small $\varepsilon$, Proposition~\ref{prop:cwconvergence}.\ref{item:repspoles} shows that $r^\varepsilon$ has no poles on $\alpha\Delta\subset \Delta_0$, and we may use the argument principle to show that $r^\varepsilon-f$ has exactly $m+n+1$ zeros in $\alpha\Delta$ counting multiplicity.

Let $z_0(\varepsilon),\ldots,z_{m+n}(\varepsilon)$ correspond to the zeros of $r^\varepsilon-f$ on $\alpha\Delta$.
We may choose $\alpha'\leq \alpha$ arbitrary small and repeat the arguments above for the disk $\alpha'\Delta$ to show that the zeros of $r^\varepsilon-f$ converge to the origin for $\varepsilon\to 0$, i.e., $z_j(\varepsilon)\to 0$ for $\varepsilon \to 0$ and $j=0,\ldots,m+n$.

\end{proof}

\section{Error of rational interpolants with scaled nodes}\label{sec:asymerrinterp}

\begin{proposition}\label{prop:convinterp}
Assume the $(m,n$)-Pad\'e approximant to $f$ is non-degenerate. Let $\zeta_0,\ldots,\zeta_{m+n}\in\Theta$ be given nodes for a compact set $\Theta\subset\mathbb{C}$, and let $r^\varepsilon\in\mathcal{R}_{mn}$ denote the Newton-Pad\'e approximant to $f$ for the interpolation nodes $z_0(\varepsilon),\ldots,z_{m+n}(\varepsilon)$ with $z_j(\varepsilon)= \varepsilon \zeta_{j}$ for $j=0,\ldots,m+n$. Then, there exists an upper bound $L_1>0$ s.t.
\begin{subequations}\label{eq:errorlimitinterpboth}
\begin{equation}\label{eq:errorlimitinterp0}
\limsup_{\varepsilon\to 0} \frac{\left|r^\varepsilon(\varepsilon z)-f(\varepsilon z) - a_{mn} \varepsilon^{m+n+1} \prod_{j=0}^{m+n}(z-\zeta_j)\right|}{\varepsilon^{m+n+2}} \leq L_1 < \infty,
\end{equation}
for all $z\in K$ and $\zeta_0,\ldots,\zeta_{m+n}\in\Theta$, with $a_{mn}$ as in~\eqref{eq:Padeasymerror}.
In other words, using $\mathcal{O}$-notation we have
\begin{equation}
r^\varepsilon(\varepsilon z)-f(\varepsilon z)
= a_{mn} \varepsilon^{m+n+1} \prod_{j=0}^{m+n}(z-\zeta_j)  + \mathcal{O}(\varepsilon^{m+n+2}),
~~~\varepsilon\to 0,
\end{equation}
\end{subequations}
uniformly for $z\in K$ and $\zeta_0,\ldots,\zeta_{m+n}\in\Theta$.

In a similar manner, there exists a constant $L_2>0$ s.t.\ the uniform error satisfies
\begin{subequations}\label{eq:errorlimitinterpuniformboth}
\begin{equation}\label{eq:errorlimitinterpuniform0}
\limsup_{\varepsilon\to 0} \frac{\left|\|r^\varepsilon-f\|_{\varepsilon K} - |a_{mn}| \varepsilon^{m+n+1} \max_{z\in K} \prod_{j=0}^{m+n} |z-\zeta_j|\right|}{\varepsilon^{m+n+2}} \leq L_2 < \infty,
\end{equation}
for all $\zeta_0,\ldots,\zeta_{m+n}\in\Theta$.
Using $\mathcal{O}$-notation we have
\begin{equation}\label{eq:errorlimitinterpuniform}
\|r^\varepsilon-f\|_{\varepsilon K}
= |a_{mn}| \varepsilon^{m+n+1} \max_{z\in K} \prod_{j=0}^{m+n} |z-\zeta_j|  + \mathcal{O}(\varepsilon^{m+n+2}),
~~~\varepsilon\to 0,
\end{equation}
\end{subequations}
uniformly for $\zeta_0,\ldots,\zeta_{m+n}\in\Theta$.
\end{proposition}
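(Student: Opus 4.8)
The plan is to read off both assertions from the error representation and the remainder estimate furnished by Proposition~\ref{prop:cwconvergence}, exploiting the special algebraic structure of the scaled nodes $z_j(\varepsilon)=\varepsilon\zeta_j$. Since $\Theta$ is compact, these nodes satisfy $z_j(\varepsilon)\to 0$ as $\varepsilon\to 0$, so Proposition~\ref{prop:cwconvergence} applies: fixing a disk $\Delta_0\subset E$ that contains no poles of $r^P$, for sufficiently small $\varepsilon$ the interpolant $r^\varepsilon$ is non-degenerate and pole-free on $\Delta_0$, and the representation~\eqref{eq:asymerrorinterp0} holds. Writing it with a dummy variable $w\in\Delta_0$ as $r^\varepsilon(w)-f(w)=v^\varepsilon(w)\prod_{j=0}^{m+n}(w-z_j(\varepsilon))$, I would set $w=\varepsilon z$ for $z\in K$; compactness of $K$ ensures $\varepsilon z\in\Delta_0$ once $\varepsilon$ is small.

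The crucial simplification provided by the scaled nodes is that $\varepsilon z-z_j(\varepsilon)=\varepsilon(z-\zeta_j)$, so the product factorizes as $\prod_{j=0}^{m+n}(\varepsilon z-z_j(\varepsilon))=\varepsilon^{m+n+1}\prod_{j=0}^{m+n}(z-\zeta_j)$. Applying Proposition~\ref{prop:cwconvergence}.\ref{item:amncont0}, together with $|z_j(\varepsilon)|=\varepsilon|\zeta_j|=\mathcal{O}(\varepsilon)$ uniformly in $\zeta_j\in\Theta$ and $\varepsilon|z|=\mathcal{O}(\varepsilon)$ uniformly in $z\in K$, reduces~\eqref{eq:vepsOe0} to $v^\varepsilon(\varepsilon z)=a_{mn}+\mathcal{O}(\varepsilon)$. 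Multiplying by the factorized product and using that $\prod_{j=0}^{m+n}(z-\zeta_j)$ is bounded on the compact sets $K$ and $\Theta$ yields
\[
r^\varepsilon(\varepsilon z)-f(\varepsilon z)=a_{mn}\varepsilon^{m+n+1}\prod_{j=0}^{m+n}(z-\zeta_j)+\mathcal{O}(\varepsilon^{m+n+2}),
\]
which is exactly~\eqref{eq:errorlimitinterp0}.

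For the uniform-error claim~\eqref{eq:errorlimitinterpuniform0} I would feed the point-wise bound into the reverse triangle inequality to obtain
\[
\bigl|\,|r^\varepsilon(\varepsilon z)-f(\varepsilon z)|-|a_{mn}|\varepsilon^{m+n+1}\prod_{j=0}^{m+n}|z-\zeta_j|\,\bigr|\leq C\varepsilon^{m+n+2}
\]
uniformly in $z\in K$, for a constant $C$ independent of $z$ and $\zeta_0,\ldots,\zeta_{m+n}$. Taking $\max_{z\in K}$ in the upper one-sided bound, and for the lower bound evaluating the left-hand side at a maximizer $z^\ast\in K$ of $\prod_{j=0}^{m+n}|z-\zeta_j|$ (which exists by compactness of $K$), sandwiches $\|r^\varepsilon-f\|_{\varepsilon K}=\max_{z\in K}|r^\varepsilon(\varepsilon z)-f(\varepsilon z)|$ within $C\varepsilon^{m+n+2}$ of $|a_{mn}|\varepsilon^{m+n+1}\max_{z\in K}\prod_{j=0}^{m+n}|z-\zeta_j|$, which is~\eqref{eq:errorlimitinterpuniform0}.

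The main obstacle is to secure all of the above \emph{uniformly} in $z\in K$ and $\zeta_0,\ldots,\zeta_{m+n}\in\Theta$ at once, since Proposition~\ref{prop:cwconvergence} is stated for a single node sequence and, a priori, both its implicit constants and the threshold on $\varepsilon$ could depend on the chosen nodes. To close this gap I would return to the determinant representation~\eqref{eq:defHEqv}: the entries of $H^\varepsilon(\varepsilon z)$ and $E^\varepsilon(\varepsilon z)$ are divided differences of $f$ at the scaled nodes $\{\varepsilon\zeta_j\}$ (plus the extra column built from the products $\prod(\varepsilon z-\varepsilon\zeta_j)$), and through the contour representation~\eqref{eq:defdd} these entries, and their deviations from the confluent values at $\varepsilon=0$, are bounded in terms only of $\|f\|$ on a fixed contour around the origin and the diameters $\max_{\zeta\in\Theta}|\zeta|$ and $\max_{z\in K}|z|$. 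Hence the constants in Proposition~\ref{prop:ddcont}, and thereby in~\eqref{eq:vepsOe0}, may be chosen uniformly over the compact parameter sets; the same continuity-plus-compactness reasoning provides a single $\varepsilon$-threshold below which $q^\varepsilon$ stays away from zero on $\Delta_0$, so that $r^\varepsilon$ is uniformly pole-free there.
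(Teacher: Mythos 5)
Your proposal is correct and follows essentially the same route as the paper: substitute the scaled nodes into the error representation of Proposition~\ref{prop:cwconvergence}.\ref{item:repserror} to factor out $\varepsilon^{m+n+1}\prod_j(z-\zeta_j)$, control $v^\varepsilon(\varepsilon z)-a_{mn}=\mathcal{O}(\varepsilon)$ uniformly via Proposition~\ref{prop:cwconvergence}.\ref{item:amncont0} and compactness of $K$ and $\Theta$, and deduce the uniform-norm statement by a reverse triangle inequality. Your closing remark on tracing the uniformity of the constants back to the determinant representation~\eqref{eq:defHEqv} is a slightly more explicit justification of what the paper asserts in its bound~\eqref{eq:vepsamnbounded}, but it is the same idea.
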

\begin{proof}
Since the underlying interpolation nodes converge to the origin, i.e., $z_j(\varepsilon)\to 0$ for $j=0,\ldots,m+n$, Proposition~\ref{prop:cwconvergence} implies that for a sufficiently small $\varepsilon$ the interpolant satisfies Hermite interpolation conditions~\eqref{eq:asymerrorinterp0} for a remainder $v^\varepsilon$ depending on the choices of the underlying nodes $\zeta_j$. Substituting $\varepsilon z$ for $z$ in~\eqref{eq:asymerrorinterp0} under the assumption that $\varepsilon z\in \Delta_0$ for $\Delta_0$ as in Proposition~\ref{prop:cwconvergence}.\ref{item:repspoles}, we arrive at
\begin{equation}\label{eq:fromrmftovepseps}
r^\varepsilon(\varepsilon z)-f(\varepsilon z) = v^\varepsilon(\varepsilon z) \varepsilon^{m+n+1} \prod_{j=0}^{m+n}(z- \zeta_j),~~~\varepsilon z\in \Delta_0.
\end{equation}
This implies
\begin{equation}\label{eq:asymerrorinterp}
\frac{\left|r^\varepsilon(\varepsilon z)-f(\varepsilon z)
- a_{mn} \varepsilon^{m+n+1} \prod_{j=0}^{m+n}(z-\zeta_j)\right|}{\varepsilon^{m+n+2}}
= \frac{\left|v^\varepsilon(\varepsilon z) - a_{mn}\right|}{\varepsilon}  \prod_{j=0}^{m+n}|z-\zeta_j|.
\end{equation}

We proceed to apply Proposition~\ref{prop:cwconvergence}.\ref{item:amncont0}.
Since $z_j(\varepsilon)=\varepsilon \zeta_j$ for $\zeta_j\in\Theta$, $j=0,\ldots,m+n$, where $\Theta$ is compact, and $z\in K$ where $K$ is compact, the convergence result in~\eqref{eq:vepsOe0} implies \begin{equation}\label{eq:vepsamnbounded}
\max_{z\in K}|v^\varepsilon(\varepsilon z) - a_{mn}| \leq \kappa \varepsilon,
\end{equation}
for a constant $\kappa>0$ and sufficiently small $\varepsilon$ independent of the explicit choice of $\zeta_0,\ldots,\zeta_{m+n}\in\Theta$. The upper bound~\eqref{eq:vepsamnbounded} together with compactness arguments yield that the right-hand side in~\eqref{eq:asymerrorinterp} is bounded uniformly for $\zeta_0,\ldots,\zeta_{m+n}\in\Theta$, and substituting~\eqref{eq:asymerrorinterp} in~\eqref{eq:errorlimitinterp0}, we conclude that~\eqref{eq:errorlimitinterpboth} holds true.

We proceed to show~\eqref{eq:errorlimitinterpuniformboth}.
Making use of a reverse triangular inequality and~\eqref{eq:fromrmftovepseps}, we observe
\begin{align}
&
\frac{\left|\|r^\varepsilon-f\|_{\varepsilon K} - |a_{mn}| \varepsilon^{m+n+1} \max_{z\in K} \prod_{j=0}^{m+n} |z-\zeta_j|\right|}{\varepsilon^{m+n+2}}
\notag\\
&\qquad\leq 
\frac{\max_{z\in K}\left|r^\varepsilon(\varepsilon z)-f(\varepsilon z)
- a_{mn} \varepsilon^{m+n+1} \prod_{j=0}^{m+n}(z-\zeta_j)\right|}{\varepsilon^{m+n+2}}
\notag\\
&\qquad \leq \frac{\max_{z\in K}   |v^\varepsilon(\varepsilon z) - a_{mn}|}{\varepsilon} \max_{z\in K} \prod_{j=0}^{m+n} |z-\zeta_j|.\label{eq:rmfconvdetailmaxbound}
\end{align}
Similar to~\eqref{eq:asymerrorinterp}, the right-hand side in~\eqref{eq:rmfconvdetailmaxbound} is bounded uniformly for $\zeta_0,\ldots,\zeta_{m+n}\in\Theta$ due to~\eqref{eq:vepsamnbounded} and compactness arguments. Substituting~\eqref{eq:asymerrorinterp} in~\eqref{eq:errorlimitinterpuniform0} we conclude that~\eqref{eq:errorlimitinterpuniformboth} holds true.
\end{proof}

\section{Chebyshev nodes}\label{sec:chebyshevnodes}

Chebyshev polynomials of degree $m+n+1$ have some relevance for $(m,n)$-rational approximation.
Since we assume that $K$ has no isolated points, it has infinitely many points, and consequently, $K$ is a Chebyshev set for polynomials in $\mathbb{C}$, i.e., polynomials on $K$ satisfy the Haar condition, cf.~\cite[Theorem~19]{Me67}.
This implies that polynomial Chebyshev approximants on $K$ exist and are unique. In particular there exists a polynomial $g$ of degree $m+n$ which uniquely minimizes $\|z^{m+n+1} - g\|_K$.
The zeros of the polynomial $z^{m+n+1} - g$ are also referred to as the $m+n+1$ Chebyshev nodes of $K$.
Let $\tau_0,\ldots,\tau_{m+n}\in\mathbb{C}$ denote the Chebyshev nodes, then for any sequence $\zeta_0,\ldots,\zeta_{m+n}\in\mathbb{C}$ distinct to the sequence of Chebyshev nodes, 
\begin{equation}\label{eq:chebconst}
\max_{z\in K} \prod_{j=0}^{m+n} |z-\zeta_j|
> \max_{z\in K} \prod_{j=0}^{m+n} |z-\tau_j|
~~~ =: t_{m+n+1}.
\end{equation}
The polynomial $z^{m+n+1}-g(z)$, potentially re-scaled, is also referred to as Chebyshev polynomial of degree $m+n+1$.
We remark that the Chebyshev nodes are located in the convex hull of $K$.

{\em An example, the interval $K=[-1,1]$.}
Following classical results, for instance \cite[Corollary~8.1]{SM03}, the Chebyshev nodes for the interval correspond to
\begin{equation}\label{eq:ratinterpolateCheb}
\begin{aligned}
&\tau_{j} = \cos\left(\frac{(2j+1)\pi}{2(m+n+1)}\right),~~~j=0,\ldots,m+n,~~~\text{and}\\
&t_{m+n+1}=2^{-(m+n)},~~~\text{ for the interval $K=[-1,1]$.}
\end{aligned}
\end{equation}

{\em Another example, the unit disk $K=\Delta$.}
On the unit disk the Chebyshev nodes correspond to
\begin{equation*}
\begin{aligned}
&\tau_j = 0,~~~j=0,\ldots,m+n,~~~\text{and}\\
&t_{m+n+1}=1,~~~\text{ for the unit disk $K=\Delta$,}
\end{aligned}
\end{equation*}
cf.~\cite[Example~5]{LS06b}.

\subsection*{Rational interpolation at Chebyshev nodes}

The term $\max_{z\in K} \prod_{j=0}^{m+n} |z-\zeta_j|$ appears in~\eqref{eq:errorlimitinterpuniformboth} as a factor for the asymptotic uniform error of the rational interpolant for the interpolation nodes $z_0(\varepsilon),\ldots, z_{m+n}(\varepsilon)$ where $z_j(\varepsilon)=\varepsilon\zeta_j$, $j=0,\ldots,m+n$. 
Following remarks of the present section, particularly~\eqref{eq:chebconst}, this factor is minimized when choosing the underlying nodes $\zeta_0,\ldots,\zeta_{m+n}$ to be the $m+n+1$ Chebyshev nodes $\tau_0,\ldots,\tau_{m+n}$ of $K$. More precisely, let $r^\varepsilon_\tau\in\mathcal{R}_{mn}$ denote the rational interpolant for $z_j(\varepsilon)=\varepsilon \tau_j$, $j=0,\ldots,m+n$. Then substituting $\tau_j$ for $\zeta_j$ in~\eqref{eq:errorlimitinterpuniformboth} in Proposition~\ref{prop:convinterp} and making use of~\eqref{eq:chebconst}, we observe
\begin{equation}\label{eq:errorlimitinterpchebnodes}
\|r^\varepsilon_\tau(\varepsilon z)-f(\varepsilon z)\|_{\varepsilon K}
= |a_{mn}| t_{m+n+1} \varepsilon^{m+n+1}   + \mathcal{O}(\varepsilon^{m+n+2}),
~~~\varepsilon\to 0.
\end{equation}
When considering rational interpolants with scaled interpolation nodes, the Chebyshev nodes yield an optimal asymptotically leading order term for the uniform error~\eqref{eq:errorlimitinterpuniform}.

\section{Interpolatory best approximations}\label{sec:ibest}

Let $\mathcal{I}_{mn}^{\Theta}\subset\mathcal{R}_{mn}$ for $\Theta\subset E$ refer to the set of rational functions in $\mathcal{R}_{mn}$ which interpolate $f$ in the Newton-Pad\'e sense at $m+n+1$ nodes $z_0, \ldots, z_{m+n} \in\Theta$ counting multiplicity.
In particular,
\begin{equation*}
\mathcal{I}_{mn}^{\Theta} := \{r\in\mathcal{R}_{mn}~|~\text{$r$ interpolates $f$ at nodes $z_0,\ldots,z_{m+n}\in\Theta$}\}.
\end{equation*}
For any sequence of $m+n+1$ interpolation nodes in $\Theta$, there exists a unique Newton-Pad\'e approximant in $\mathcal{R}_{mn}$ to $f$, and thus, the set $\mathcal{I}_{mn}^{\Theta}$ is non-empty provided $\Theta$ is non-empty which we assume in the sequel.

\begin{proposition}[Existence of interpolatory best approximants]\label{prop:bestinterp}
Let $m$ and $n$ be given degrees and let $\Theta\subset E$ be a given compact set. Assume there exists $u\in\mathcal{I}_{mn}^{\Theta}$ with $\|u-f\|_K<\infty$. Then there exists $\widehat{r}\in\mathcal{I}_{mn}^{\Theta}$ which minimizes the uniform error on $K\subset E$ in the set $\mathcal{I}_{mn}^{\Theta}$, i.e.,
\begin{equation}\label{eq:infinITK}
\|\widehat{r} - f\|_{K}
= \inf_{r\in\mathcal{I}_{mn}^{\Theta}} \|r - f\|_{K}<\infty.
\end{equation}
We may also refer to this infimum as a minimum in the sequel.
\end{proposition}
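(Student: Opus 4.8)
The plan is to parametrize $\mathcal{I}_{mn}^{\Theta}$ by its interpolation nodes and to recast the problem as minimizing a lower semicontinuous functional over a compact set. Concretely, let $S=\Theta^{m+n+1}$, which is compact since $\Theta$ is compact, and for a tuple of nodes $Z=(z_0,\ldots,z_{m+n})\in S$ let $r_Z\in\mathcal{R}_{mn}$ denote the unique Newton--Pad\'e approximant to $f$ at these nodes, so that $\mathcal{I}_{mn}^{\Theta}=\{r_Z~|~Z\in S\}$. Define $\Phi(Z):=\|r_Z-f\|_K\in[0,\infty]$. Since $u\in\mathcal{I}_{mn}^{\Theta}$ has $\|u-f\|_K<\infty$, the functional $\Phi$ is finite for at least one argument. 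The key fact I would prove is that $\Phi$ is lower semicontinuous on $S$; as a lower semicontinuous function on a compact set attains its infimum, there is then $Z^\ast\in S$ with $\Phi(Z^\ast)=\inf_{Z\in S}\Phi(Z)=\inf_{r\in\mathcal{I}_{mn}^{\Theta}}\|r-f\|_K$, and $\widehat{r}:=r_{Z^\ast}\in\mathcal{I}_{mn}^{\Theta}$ is the desired minimizer.

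To set up the semicontinuity argument I would first record that the map $Z\mapsto r_Z$ is continuous in the following sense. Using the determinant representation of the numerator and denominator of the Newton--Pad\'e approximant from \cite[Theorem~7.1.1]{BG96} (as employed in the proof of Proposition~\ref{prop:cwconvergence}), the entries of the relevant matrices are divided differences of $f$ and products of factors $(z-z_k)$, all of which depend continuously on the nodes by Proposition~\ref{prop:ddcont} (applied with the fixed function $g=f$). Hence the coefficients of $p_Z$ and $q_Z$ depend continuously on $Z$, so that $Z_k\to Z^\ast$ in $S$ yields coefficient-wise convergence $p_{Z_k}\to p_{Z^\ast}$ and $q_{Z_k}\to q_{Z^\ast}$, and in particular $r_{Z_k}(w)\to r_{Z^\ast}(w)$ at every $w$ which is not a pole of $r_{Z^\ast}$.

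For lower semicontinuity, take $Z_k\to Z^\ast$ and set $L=\liminf_k\Phi(Z_k)$; I may pass to a subsequence with $\Phi(Z_k)\to L$ and assume $L<\infty$, the case $L=\infty$ being trivial. The main obstacle, and the heart of the proof, is to exclude that the limiting interpolant $r_{Z^\ast}$ develops a pole on $K$: a priori the errors $\|r_{Z_k}-f\|_K$ could stay bounded while a pole of $r_{Z_k}$ migrates onto $K$ in the limit. Here I would crucially use that $K$ has no isolated points. Suppose $r_{Z^\ast}$ had a pole at some $w_0\in K$. Then there is a sequence $w^{(i)}\in K\setminus\{w_0\}$ with $w^{(i)}\to w_0$, and since $f$ is bounded near $w_0$ while $|r_{Z^\ast}(w^{(i)})|\to\infty$, we have $|r_{Z^\ast}(w^{(i)})-f(w^{(i)})|\to\infty$. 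For each fixed $i$, the point $w^{(i)}$ is not a pole of $r_{Z^\ast}$, so $r_{Z_k}(w^{(i)})\to r_{Z^\ast}(w^{(i)})$ and therefore $L=\liminf_k\Phi(Z_k)\ge|r_{Z^\ast}(w^{(i)})-f(w^{(i)})|$; letting $i\to\infty$ forces $L=\infty$, a contradiction.

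Consequently $r_{Z^\ast}$ has no pole on $K$, so $r_{Z^\ast}-f$ is continuous on the compact set $K$. For every $w\in K$ that is not a zero of $q_{Z^\ast}$, the pointwise bound $|r_{Z_k}(w)-f(w)|\le\Phi(Z_k)$ together with $r_{Z_k}(w)\to r_{Z^\ast}(w)$ gives $|r_{Z^\ast}(w)-f(w)|\le L$; as these points are dense in $K$ (the finitely many zeros of $q_{Z^\ast}$ being removed from a set with no isolated points), continuity of $r_{Z^\ast}-f$ extends the bound to all of $K$, and taking the maximum yields $\Phi(Z^\ast)\le L$. This establishes lower semicontinuity and completes the proof.
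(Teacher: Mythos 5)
Your overall strategy (parametrize by node tuples in the compact set $\Theta^{m+n+1}$, show the error functional is lower semicontinuous, conclude the infimum is attained) is reasonable, but it hinges on a continuity claim that does not hold in the generality of the proposition. You assert that the determinant representation of \cite[Theorem~7.1.1]{BG96} gives coefficients of $p_Z$ and $q_Z$ depending continuously on $Z$, and hence $r_{Z_k}(w)\to r_{Z^\ast}(w)$ away from the poles of $r_{Z^\ast}$. The determinant entries do vary continuously with the nodes (Proposition~\ref{prop:ddcont}), but the resulting denominator $\det H^{Z^\ast}(\cdot)$ may be the \emph{identically zero} polynomial at a degenerate node configuration (already for the Pad\'e case this happens exactly when the Hankel determinant block degenerates, e.g.\ $f(z)=1+z^3$ with $m=n=1$ and all nodes at $0$). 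In that case the determinant formula does not represent $r_{Z^\ast}$ at all, and coefficient-wise convergence to a pair with a common zero (or to the zero pair) does not imply pointwise convergence of the quotients — this is precisely the ill-posedness of the rational interpolation map discussed in \cite{Gu90}, and the paper is careful to invoke the equivalence of coefficient-wise and almost-uniform convergence only under a non-degeneracy hypothesis. Proposition~\ref{prop:bestinterp} carries no such hypothesis, so your lower semicontinuity argument has a genuine hole at degenerate $Z^\ast$ (and, similarly, along sequences $Z_k$ at which the determinant denominator degenerates).

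The paper's proof avoids the node parametrization entirely: it takes a minimizing sequence $\widetilde r_k=\widetilde p_k/\widetilde q_k$ of the rational functions themselves, normalizes $\|\widetilde q_k\|_K=1$ so that a coefficient-wise limit $(\widetilde p,\widetilde q)$ with $\widetilde q\not\equiv 0$ can be extracted, transfers the error bound to the limit off the finitely many zeros of $\widetilde q$ and extends it by continuity using that $K$ has no isolated points (this part matches yours), and then — crucially — verifies membership in $\mathcal{I}_{mn}^{\Theta}$ by passing to the limit in the \emph{linearized} conditions $(f\widetilde q_k-\widetilde p_k)[z_{0,k},\ldots,z_{j,k}]=0$ via continuity of divided differences, after extracting a convergent subsequence of nodes from the compact set $\Theta$. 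To repair your argument you would need to do essentially the same: replace the determinant parametrization by normalized solutions of the linearized problem, so that the limit object is guaranteed to be a nontrivial solution of the linearized system at $Z^\ast$ and hence, by uniqueness of the quotient, equal to $r_{Z^\ast}$.
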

\begin{proof}
It may occur that an interpolant $r\in\mathcal{I}_{mn}^{\Theta}$ has poles in $K$, in which case $ \|r - f \|_{K} = \infty$.
Since we assume there exists at least one rational interpolant $u\in\mathcal{I}_{mn}^{\Theta}$ with $\|u-f\|_K<\infty$, there exists a minimizing sequence $\{\widetilde{r}_k\}_{k\in\mathbb{N}}$, $\widetilde{r}_k\in\mathcal{I}_{mn}^{\Theta}$, for the uniform error on $K$, i.e.,
\begin{equation*}
\lim_{k\to \infty} \|\widetilde{r}_k - f \|_{K}
= \inf_{r\in\mathcal{I}_{mn}^{\Theta}} \|r - f\|_{K}<\infty.
\end{equation*}
Moreover, we may assume that $\|\widetilde{r}_k\|_K$ is bounded for $k\in\mathbb{N}$, in particular, $\|\widetilde{r}_k\|_K\leq \|u-f\|_K +\|f\|_K<\infty$.
Let $\widetilde{r}_k = \widetilde{p}_k/\widetilde{q}_k$ where $\widetilde{p}_k$ and $\widetilde{q}_k$ are polynomials of degree $\leq m$ and $\leq n$, respectively.
Similar to~\cite{Wa31} or the proof of~\cite[Theorem~24.1]{Tre13} we may normalize the quotient $\widetilde{p}_k/\widetilde{q}_k$ by setting $\|\widetilde{q}_k\|_K=1$. This further implies $\|\widetilde{p}_k\|_K \leq \|\widetilde{r}_k\|_K$, and thus, $\widetilde{q}_k$ and $\widetilde{p}_k$ are contained in compact sets.

Consequently, there exists a sub-sequence $k_\ell$ s.t.\ $\widetilde{p}_{k_\ell}\to \widetilde{p}$ and $\widetilde{q}_{k_\ell}\to \widetilde{q}$ coefficient-wise where $\widetilde{p}$ and $\widetilde{q}$ are polynomials of degree $\leq m$ and $\leq n$, respectively. Define $\widetilde{r} = \widetilde{p}/\widetilde{q}\in\mathcal{R}_{mn}$. For any point $z\in K$ which is not a zero of $\widetilde{q}$, we observe
\begin{equation}\label{eq:uerrornopoleproof}
|\widetilde{r}(z)-f(z)| = \lim_{\ell\to\infty}| \widetilde{p}_{k_\ell}(z)/\widetilde{q}_{k_\ell}(z) - f(z)| \leq \inf_{r\in\mathcal{I}_{mn}^{\Theta}} \|r - f\|_{K}.
\end{equation}
Thus, this lower bound holds true for all $z\in K$ except for at most $n$ points. 
Since we assume $K$ has no isolated points, continuity of the error in $K$ yields that the inequality~\eqref{eq:uerrornopoleproof} holds true for all $z\in K$. Thus,
\begin{equation*}
\|\widetilde{r}-f\| \leq \inf_{r\in\mathcal{I}_{mn}^{\Theta}} \|r - f\|_{K},
\end{equation*}
which shows that $\widetilde{r}$ attains the infimum~\eqref{eq:infinITK}.

It remains to show that $\widetilde{r}\in\mathcal{R}_{mn}$ attains $m+n+1$ interpolation nodes in $\Theta$ counting multiplicity, i.e., $\widetilde{r}\in\mathcal{I}_{mn}^{\Theta}$.
Since $\widetilde{r}_k\in\mathcal{I}_{mn}^{\Theta}$ for $k\in\mathbb{N}$, there exists a corresponding sequence of interpolation nodes $\{(z_{0,k},\ldots,z_{m+n,k})\}_{k\in\mathbb{N}}$ s.t.\ $\widetilde{r}_k$ is the Newton-Pad\'e approximant to $f$ for the nodes $z_{0,k},\ldots,z_{m+n,k}\in\Theta$. Since $\Theta$ is compact, we may choose a sub-sequence $k_i\subset k_\ell$ s.t.\ $z_{j,k_i}\to \widetilde{z}_j$ for $j=0,\ldots,m+n$ and nodes $ \widetilde{z}_0,\ldots, \widetilde{z}_{m+n}\in\Theta$. 
It remains to show that
\begin{equation}\label{eq:linsystemfabkj0}
(f \widetilde{q}-\widetilde{p})[\widetilde{z}_0,\ldots,\widetilde{z}_j] = 0,~~~j=0,\ldots,m+n.
\end{equation}
We recall that $\widetilde{q}_{k_i}\to \widetilde{q}$ and $\widetilde{p}_{k_i}\to \widetilde{q}$ coefficient-wise, and consequently $f \widetilde{q}_k-\widetilde{p}_k\to f \widetilde{q}-\widetilde{p}$ uniformly on any compact subset of $\mathbb{C}$. As noted in Proposition~\ref{prop:ddcont}, divided differences depend continuously on the underlying function as well as on the underlying nodes, and thus
\begin{equation*}
(f \widetilde{q}-\widetilde{p})[\widetilde{z}_0,\ldots,\widetilde{z}_{j}]
= \lim_{i \to \infty} (f \widetilde{q}_{k_i}-\widetilde{p}_{k_i})[z_{0,{k_i}},\ldots,z_{j, {k_i}}],~~~j=0,\ldots,m+n.
\end{equation*}
Since
\begin{equation*}
(f \widetilde{q}_{k_i}-\widetilde{p}_{k_i})[z_{0,{k_i}},\ldots,z_{j, {k_i}}] = 0,~~~j=0,\ldots,m+n,~~~i\in\mathbb{N}.
\end{equation*}
this implies that~\eqref{eq:linsystemfabkj0} holds true. Consequently, $\widetilde{r}$ is a Newton-Pad\'e approximant for the nodes $\widetilde{z}_0,\ldots,\widetilde{z}_{m+n}\in\Theta$, i.e., $\widetilde{r}\in\mathcal{I}_{mn}^{\Theta}$.
\end{proof}

We also consider interpolatory best approximations on shrinking domains. In particular, best approximants on $\varepsilon K$ with $m+n+1$ interpolation nodes in $\varepsilon \Theta$ and $\varepsilon\to 0$.

\begin{corollary}\label{prop:bestinterpconv}
Let $m$ and $n$ be given degrees, and assume the $(m,n)$-Pad\'e approximant to $f$ is non-degenerate. Then, interpolatory best approximants in $\mathcal{I}_{mn}^{\varepsilon\Theta}$ on $\varepsilon K$ exist exist for sufficiently small $\varepsilon$, i.e., there exist $r^\varepsilon\in\mathcal{I}_{mn}^{\varepsilon\Theta}$ with
\begin{equation}\label{eq:interpbestepsK}
\|r^\varepsilon - f\|_{\varepsilon K}
= \min_{r\in\mathcal{I}_{mn}^{\varepsilon\Theta}} \|r - f\|_{\varepsilon K}.
\end{equation}
Moreover, the interpolatory best approximations in $r^{\varepsilon}\in\mathcal{I}_{mn}^{\varepsilon\Theta}$ satisfy $r^\varepsilon \to r^P$ for $\varepsilon \to 0$.
\end{corollary}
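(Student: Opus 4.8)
The plan is to prove existence and convergence in two stages, reducing everything to the scaled interval $K$ and $\Theta$ via the substitution $z \mapsto \varepsilon z$. First I would rescale: for any $r \in \mathcal{R}_{mn}$ interpolating $f$ on $\varepsilon\Theta$, the function $\tilde{r}(z) := r(\varepsilon z)$ lies in $\mathcal{R}_{mn}$ and interpolates $\tilde{f}_\varepsilon(z) := f(\varepsilon z)$ at $m+n+1$ nodes in $\Theta$, and the uniform errors satisfy $\|r - f\|_{\varepsilon K} = \|\tilde{r} - \tilde{f}_\varepsilon\|_K$. This identifies the minimization problem~\eqref{eq:interpbestepsK} with a minimization over $\mathcal{I}_{mn}^{\Theta}$ for the function $\tilde{f}_\varepsilon$, to which Proposition~\ref{prop:bestinterp} applies once I exhibit a single competitor of finite error.

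The key existence step is therefore to produce a fixed interpolant whose error on $\varepsilon K$ is finite and uniformly controlled for small $\varepsilon$. For this I would take the rational interpolant $r^\varepsilon_\tau$ at scaled Chebyshev nodes $z_j(\varepsilon) = \varepsilon\tau_j$ introduced in Section~\ref{sec:chebyshevnodes}, noting that the Chebyshev nodes $\tau_j$ lie in the convex hull of $K$ and hence in a compact set $\Theta$ (enlarging $\Theta$ if necessary, or assuming it contains these nodes). By Proposition~\ref{prop:cwconvergence}.\ref{item:repspoles} and~\eqref{eq:errorlimitinterpchebnodes}, for sufficiently small $\varepsilon$ this interpolant has no poles on $\varepsilon K \subset \Delta_0$ and satisfies $\|r^\varepsilon_\tau - f\|_{\varepsilon K} = |a_{mn}| t_{m+n+1}\varepsilon^{m+n+1} + \mathcal{O}(\varepsilon^{m+n+2}) < \infty$. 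This furnishes the hypothesis of Proposition~\ref{prop:bestinterp}, which then yields existence of a minimizer $r^\varepsilon \in \mathcal{I}_{mn}^{\varepsilon\Theta}$ attaining~\eqref{eq:interpbestepsK}.

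For the convergence $r^\varepsilon \to r^P$, the strategy is to show that the interpolation nodes of $r^\varepsilon$ converge to the origin and then invoke Proposition~\ref{prop:cwconvergence}.\ref{item:convreps}. Since $r^\varepsilon \in \mathcal{I}_{mn}^{\varepsilon\Theta}$, it is a Newton--Pad\'e approximant at nodes $z_0(\varepsilon),\ldots,z_{m+n}(\varepsilon) \in \varepsilon\Theta$; because $\Theta$ is compact and bounded, every such node satisfies $|z_j(\varepsilon)| \le \varepsilon \max_{\zeta\in\Theta}|\zeta| \to 0$ as $\varepsilon \to 0$. Thus the nodes converge to the origin automatically, and Proposition~\ref{prop:cwconvergence}.\ref{item:convreps} gives $r^\varepsilon \to r^P$. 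I expect the main obstacle to be the existence step rather than convergence: one must be careful that the competitor $r^\varepsilon_\tau$ genuinely has finite error on all of $\varepsilon K$ (not merely on a smaller disk), which is exactly why the uniform control over $z \in K$ in Proposition~\ref{prop:convinterp}, together with the pole-avoidance in Proposition~\ref{prop:cwconvergence}.\ref{item:repspoles}, is needed. Once finiteness of one competitor is secured uniformly for small $\varepsilon$, the remaining arguments are direct applications of the preceding propositions.
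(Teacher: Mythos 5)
Your overall route is the same as the paper's: exhibit one interpolant in $\mathcal{I}_{mn}^{\varepsilon\Theta}$ with finite error on $\varepsilon K$ so that Proposition~\ref{prop:bestinterp} (applied with $\varepsilon K$ and $\varepsilon\Theta$ in place of $K$ and $\Theta$, or equivalently via your rescaling $z\mapsto\varepsilon z$) yields existence, and then observe that any best approximant's nodes lie in $\varepsilon\Theta$ and hence tend to $0$, so Proposition~\ref{prop:cwconvergence}.\ref{item:convreps} gives $r^\varepsilon\to r^P$. The convergence half is exactly the paper's argument and is fine.

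The one genuine flaw is your choice of competitor. The corollary is stated for an arbitrary non-empty compact $\Theta\subset E$; it does \emph{not} assume that $\Theta$ contains the Chebyshev nodes of $K$ (that hypothesis is only imposed later, in Theorem~\ref{thm:asymresults}). The interpolant $r^\varepsilon_\tau$ at the scaled Chebyshev nodes $\varepsilon\tau_j$ therefore need not belong to $\mathcal{I}_{mn}^{\varepsilon\Theta}$, and ``enlarging $\Theta$'' changes the set over which~\eqref{eq:interpbestepsK} minimizes, so it does not prove the stated claim. The repair is immediate and is what the paper does: take any fixed $\zeta_0,\ldots,\zeta_{m+n}\in\Theta$ and use the Newton--Pad\'e interpolant $u^\varepsilon$ at the nodes $\varepsilon\zeta_j$; Proposition~\ref{prop:cwconvergence}.\ref{item:repspoles} gives pole-freeness of $u^\varepsilon$ on a fixed disk $\Delta_0$ containing $\varepsilon K$ for small $\varepsilon$, hence $\|u^\varepsilon-f\|_{\varepsilon K}<\infty$. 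Note also that for finiteness of the competitor's error you only need this pole-avoidance; the quantitative asymptotics~\eqref{eq:errorlimitinterpchebnodes} and the uniformity in Proposition~\ref{prop:convinterp} are not required at this stage.
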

\begin{proof}
{\em Existence.}
We first clarify that interpolatory best approximants in $\mathcal{I}_{mn}^{\varepsilon\Theta}$ exist for a sufficiently small $\varepsilon$. Let $\zeta_0,\ldots,\zeta_{m+n}\in\Theta$ denote an arbitrary but fixed sequence of points. Let $u^\varepsilon\in\mathcal{R}_{mn}$ denote the rational interpolant at the nodes $z_0,\ldots,z_{m+n}$ where $z_j(\varepsilon)=\varepsilon \zeta_j$ for $j=0,\ldots,m+n$. By construction $u^\varepsilon\in \mathcal{I}_{mn}^{\varepsilon\Theta}$. Let $\Delta_0$ denote a disk s.t.\ $r^P$ has no poles thereon. For a sufficiently small $\varepsilon$, this carries over to $r^\varepsilon$ as a consequence of Proposition~\ref{prop:cwconvergence}.\ref{item:repspoles}.
Moreover, for a sufficiently small $\varepsilon$ the set $\varepsilon K$ is contained in $\Delta_0$, and consequently, $u^\varepsilon$ has no poles on $\varepsilon K$ which implies $\|u^\varepsilon-f\|_{\varepsilon K}<\infty$. Thus, the conditions of Proposition~\ref{prop:bestinterp} hold true which shows existence of interpolatory best approximants $r^\varepsilon\in\mathcal{I}_{mn}^{\varepsilon\Theta}$, for sufficiently small $\varepsilon$. In particular, $r^\varepsilon$ satisfies~\eqref{eq:interpbestepsK}.

{\em Convergence.}
An interpolatory best approximant $r^\varepsilon \in \varepsilon\Theta$ attains interpolation nodes $z_j(\varepsilon)=\varepsilon \zeta_j(\varepsilon)$ with $\zeta_j(\varepsilon)\in\Theta$ for $j=0,\ldots,m+n$. Since $\Theta$ is compact the interpolation nodes satisfy $z_j(\varepsilon)\to 0$ for $\varepsilon\to 0$. Consequently, Proposition~\ref{prop:cwconvergence} implies $r^\varepsilon\to r^P$ for $\varepsilon \to 0$.
\end{proof}

\subsection{Examples}

Besides complex Chebyshev approximation, for which interpolation properties are discussed in the following section, interpolatory best approximations also cover real Chebyshev approximation to real functions and the unitary best approximation to the exponential function as shown in the following

\subsubsection{Real Chebyshev approximation to real functions}\label{subsec:realChebI}
We refer to the set of $(m,n)$-rational functions which are real-valued on $[-1,1]$ as $\mathcal{R}_{mn}^{\text{real}}$, and particularly, this set corresponds to the set of rational functions $r=p/q\in\mathcal{R}_{mn}$ where $p$ and $q$ have real coefficients up to a complex common factor of $p$ and $q$.

In general Chebyshev approximants to real functions $f$ on $K=[-1,1]$ are complex-valued~\cite[Chapter~24]{Tre13}. However, there is some interest on real rational Chebyshev approximants, i.e., best approximants in $\mathcal{R}_{mn}^{\text{real}}$ in a Chebyshev sense, in this setting due to their desirable properties, cf.~\cite[Theorem~24.1]{Tre13}. Following classical results stated therein, for the non-degenerate case the error of the real $(m,n)$-rational Chebyshev approximant $r^\text{real}$ to a real function $f$ on the interval $[-1,1]$ attains $m+n+2$ equioscillation points. Consequently, $r^\text{real}$ interpolates $f$ at $m+n+1$ nodes intermediate to the equioscillation points, i.e., $r^\text{real}\in\mathcal{I}_{mn}^{\Theta}$ for $K=[-1,1]$ and $\Theta=[-1,1]$. Moreover, since rational interpolants which interpolate real-valued data on the interval $[-1,1]$ are real-valued, i.e., $\mathcal{I}_{mn}^{\Theta}\subset \mathcal{R}_{mn}^{\text{real}}$, real Chebyshev approximant $r^\text{real}$ corresponds to the interpolatory best approximations~\eqref{eq:infinITK} in this setting. Analogously, this holds true when the underlying domains $K$ and $\Theta$ are scaled by $\varepsilon>0$ as in~\eqref{eq:interpbestepsK}.

\subsubsection{Unitary best approximation to the exponential function}\label{subsec:unitaryapproxI}

The unitary best approximation to $\exp(\mathrm{i}\omega x)$ for a given frequency $\omega>0$ and $x\in [-1,1]$ was introduced recently in~\cite{JS23u}. Rational approximation therein is subject to the notation $r(\mathrm{i} x) \approx \exp(\mathrm{i}\omega x)$, and can be rephrased as
\begin{equation}\label{eq:unitaryapproxexp}
r(z)\approx f(z)= \exp(z),~~~\text{ for $ z\in\varepsilon K $ and $K=\mathrm{i} [-1,1]$}.
\end{equation}
with $\varepsilon = \omega$ under consideration of re-scaling the argument of $r$. In particular, approximation~\eqref{eq:unitaryapproxexp} fits to the setting of the present work considering the scaling parameter $\varepsilon$ as the underlying frequency $\omega$. In the context of~\eqref{eq:unitaryapproxexp} and under the assumption that $\varepsilon$ is fixed and $n$ refers to a given degree, the {\em unitary best approximant} is defined as the $(n,n)$-rational approximant which minimizes $\|r- \exp\|_{\varepsilon K}$ in the set of unitary $(n,n)$-rational functions, where $r$ is referred to as unitary if
\begin{equation}\label{eq:defunitary}
|r(z)|=1~~~\text{for}~~z\in\mathrm{i}\mathbb{R}.
\end{equation}
This is in line with the definition of unitary best approximation to $\exp(\mathrm{i}\omega x)$ introduced in~\cite{JS23u}. In particular, existence and uniqueness of the unitary best approximant is discussed in sections~3 and~5 therein.

For rational approximation to the exponential function on a subset of the imaginary axis, unitarity is closely related to interpolation properties, and unitary best approximation can be understood as an interpolatory best approximation in general as shown in the following proposition.
\begin{proposition}\label{prop:unitaryandinterpolation}
Let $n$ denote a given degree. The following statements hold true.
\begin{enumerate}[label=(\roman*)]
\item\label{item:unitarprop} An $(n,n)$-rational function is unitary~\eqref{eq:defunitary} if and only if it interpolates the exponential function at $\geq 2n+1$ nodes on the imaginary axis, counting multiplicity.
\item\label{item:unitaryandinterpolatorybest} For~\eqref{eq:unitaryapproxexp} with $\varepsilon\in(0,(n+1)\pi)$, the unitary best approximation corresponds to the interpolatory best approximation~\eqref{eq:interpbestepsK} with $\Theta=\mathrm{i}[-1,1]$.
\end{enumerate}
\end{proposition}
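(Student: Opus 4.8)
The plan is to prove the two statements separately, with part \ref{item:unitarprop} serving as the key technical lemma that feeds into part \ref{item:unitaryandinterpolatorybest}.

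\textbf{Proof of \ref{item:unitarprop}.} The plan is to establish the equivalence by exploiting the symmetry of the exponential function on the imaginary axis, namely $\overline{\exp(z)} = \exp(-\overline{z}) = \exp(z^{-1}\cdot|z|^2\cdot(-1))$; more usefully, for $z\in\mathrm{i}\mathbb{R}$ we have $\overline{\exp(z)}=\exp(-z)=1/\exp(z)$, so $|\exp(z)|=1$ on $\mathrm{i}\mathbb{R}$. First I would observe that an $(n,n)$-rational function $r=p/q$ is unitary precisely when $r(z)\overline{r(z)}=1$ for $z\in\mathrm{i}\mathbb{R}$. For $z\in\mathrm{i}\mathbb{R}$, writing $z=\mathrm{i} t$, the conjugate $\overline{r(\mathrm{i} t)}$ equals $\overline{p(\mathrm{i} t)}/\overline{q(\mathrm{i} t)}$, and the map $r\mapsto r^*$ with $r^*(z):=\overline{r(-\overline z)}$ sends $(n,n)$-rational functions to $(n,n)$-rational functions while satisfying $r^*(\mathrm{i} t)=\overline{r(\mathrm{i} t)}$. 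Thus unitarity is equivalent to the rational identity $r(z)\,r^*(z)=1$, equivalently $p(z)\,p^*(z)=q(z)\,q^*(z)$ as polynomials. The plan is then to connect this to interpolation of $\exp$: since $\exp^*(z)=\overline{\exp(-\overline z)}=\exp(z)$, the exponential is its own reflection, and one shows that $r$ is unitary if and only if the error $r-\exp$ and its reflection $(r-\exp)^*$ are related so that the zeros of $r-\exp$ come in the pattern forcing $\geq 2n+1$ of them on $\mathrm{i}\mathbb{R}$. Concretely, a degree count on the linearized error $p - q\exp$ together with the polynomial identity $pp^*=qq^*$ yields that $r-\exp$ has at least $2n+1$ zeros on the imaginary axis counting multiplicity; conversely, Remark~\ref{rmk:fromzerostonodes} shows that interpolation at $2n+1$ imaginary nodes makes $r$ a Newton--Pad\'e approximant whose reflection symmetry forces $|r|=1$ on $\mathrm{i}\mathbb{R}$.

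\textbf{Proof of \ref{item:unitaryandinterpolatorybest}.} Given \ref{item:unitarprop}, every unitary $(n,n)$-rational function lies in $\mathcal{I}_{nn}^{\mathrm{i}[-1,1]}$ provided its interpolation nodes actually fall in $\varepsilon K=\varepsilon\,\mathrm{i}[-1,1]$ rather than elsewhere on $\mathrm{i}\mathbb{R}$; conversely any element of $\mathcal{I}_{nn}^{\mathrm{i}[-1,1]}$ interpolates at $2n+1$ imaginary nodes and is therefore unitary. The plan is to show that, under the restriction $\varepsilon\in(0,(n+1)\pi)$, the set of unitary $(n,n)$-rational functions and the set $\mathcal{I}_{nn}^{\varepsilon\,\mathrm{i}[-1,1]}$ coincide after intersecting with finiteness of the error on $\varepsilon K$, so that the two minimization problems in~\eqref{eq:interpbestepsK} and the unitary best approximation problem are literally the same. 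The role of the frequency bound $\varepsilon<(n+1)\pi$ is to guarantee that the $2n+1$ interpolation nodes of a unitary best approximant lie within the interval $\varepsilon\,\mathrm{i}[-1,1]$ and not further out on the imaginary axis; I would invoke the existence and node-location results from~\cite{JS23u} (its sections~3 and~5) to pin the nodes into $\varepsilon K$.

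\textbf{Main obstacle.} The hard part will be part \ref{item:unitarprop}, specifically the forward implication that unitarity \emph{forces} at least $2n+1$ interpolation nodes on the imaginary axis. Establishing $pp^*=qq^*$ from $|r|=1$ on $\mathrm{i}\mathbb{R}$ is routine, but translating this polynomial identity into a count of \emph{interpolation} nodes of $\exp$ (as opposed to mere zeros of some auxiliary polynomial) requires care about multiplicities and about the distinction between the linearized error $p-q\exp$ and the genuine error $r-\exp$; one must ensure the reflection argument produces zeros of $r-\exp$ and not spurious cancellations. The second delicate point is confirming in \ref{item:unitaryandinterpolatorybest} that the node-confinement hypothesis $\varepsilon<(n+1)\pi$ is exactly what is needed, which rests on the detailed spectral characterization in~\cite{JS23u}; I would treat that confinement as the one place where the explicit structure of the exponential (and not just abstract interpolation theory) is genuinely used.
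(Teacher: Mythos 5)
There is a genuine gap in the forward direction of part \ref{item:unitarprop}, which you yourself flag as the hard part but for which your proposed mechanism does not work. You suggest deriving ``$r-\exp$ has at least $2n+1$ zeros on the imaginary axis'' from the polynomial identity $pp^*=qq^*$ together with ``a degree count on the linearized error $p-q\exp$.'' A degree count on a polynomial identity can only bound the number of zeros from \emph{above}; it cannot force a \emph{lower} bound on the number of solutions of $r(\mathrm{i}t)=e^{\mathrm{i}t}$, and $p-q\exp$ is an entire function, not a polynomial, so it has no degree to count. (Indeed, the degree-count argument you describe is exactly the standard proof of the \emph{converse} direction: if $r$ interpolates $\exp$ at $2n+1$ imaginary nodes, then $pp^*-qq^*$ is a polynomial of degree $\le 2n$ vanishing at those $2n+1$ nodes, hence identically zero, hence $|r|=1$ on $\mathrm{i}\mathbb{R}$. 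You have in effect proposed to run that argument backwards.) The missing idea is topological, not algebraic: the paper observes that a unitary $(n,n)$-rational function maps $\mathrm{i}\mathbb{R}$ to the unit circle and, attaining each value at most $n$ times, winds around the origin at most $n$ times, whereas $\exp$ restricted to $\mathrm{i}\mathbb{R}$ winds infinitely often; comparing the two forces infinitely many intersections, in particular $\ge 2n+1$. Also note a sign slip that obscures the relevant symmetry: $\exp^*(z)=\overline{\exp(-\overline z)}=e^{-z}=1/\exp(z)$, not $\exp(z)$; the correct relation $\exp\cdot\exp^*=1$ is precisely the statement that $\exp$ is itself ``unitary'' on $\mathrm{i}\mathbb{R}$.

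Part \ref{item:unitaryandinterpolatorybest} follows the paper's route (use \ref{item:unitarprop} to get $\mathcal{I}_{nn}^{\varepsilon\Theta}$ inside the unitary class, and the node-location result of \cite{JS23u} to place the unitary best approximant's $2n+1$ nodes in $\varepsilon\,\mathrm{i}[-1,1]$), and that is fine. One imprecision: the two sets do \emph{not} coincide --- a unitary rational function may have all its interpolation nodes outside $\varepsilon\,\mathrm{i}[-1,1]$. What the argument actually gives is a one-sided inclusion $\mathcal{I}_{nn}^{\varepsilon\Theta}\subset\{\text{unitary}\}$ plus the fact that the minimizer over the larger set happens to lie in the smaller one; that is enough to identify the two minimization problems' solutions, and is how the paper phrases it.
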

\begin{proof} We first proof~\ref{item:unitarprop}.
Following \cite[Section~2]{JS24} (and~\cite[Subsection~2.1.3]{JS23u} for the case of osculatory interpolation nodes), $(n,n)$-rational functions which interpolate the exponential function at $2n+1$ nodes on the imaginary axis are unitary. On the other hand, unitary rational functions and the exponential function map the imaginary to the unit circle. Since $(n,n)$-rational functions are either constant or attain each point in the complex plain at most $n$ times, unitary rational functions rotate around the origin at most $n$ times when considered as a function of the imaginary axis. Since the exponential function as a function of the imaginary axis rotates around the origin an infinite number of times, this entails an infinite number of intersections of these two functions. Thus, unitary rational functions attain $\geq 2n+1$ interpolation nodes on the imaginary axis.

We proceed with the proof of~\ref{item:unitaryandinterpolatorybest}.
Following~\cite[Section~5]{JS23u}, particularly Corollary~5.2 therein, the unitary best approximant to $\exp(\mathrm{i} \omega x)$ attains $2n+1$ interpolation nodes on $[-1,1]$ for $\omega\in(0,(n+1)\pi)$. Rephrasing this result for the present setting, particularly $r\approx \exp$ as in~\eqref{eq:unitaryapproxexp} with $\varepsilon=\omega$, the unitary best approximant $r\approx\exp$ attains $2n+1$ interpolation nodes in $\mathrm{i} [-1,1]$. Consequently, this approximation is in the set $\mathcal{I}_{nn}^{\varepsilon \Theta}$ for $\Theta=\mathrm{i} [-1,1]$ which, following~\ref{item:unitarprop}, is a subset of the set of unitary $(n,n)$-rational functions. This implies that the unitary best approximation corresponds to the interpolatory best approximation~\eqref{eq:interpbestepsK} in the present setting.
\end{proof}

\section{Interpolation properties of complex Chebyshev approximation}\label{sec:chebipnodes}

In the following proposition we show that Chebyshev approximants on shrinking domains $\varepsilon K$ in a complex settings attain interpolation nodes on a disk, and thus, correspond to interpolatory best approximations for some $\Theta$ when $\varepsilon$ is sufficiently small.

\begin{proposition}\label{prop:Chebinterpolates}
Assume the $(m,n$)-Pad\'e approximant to $f$ is non-degenerate and $a_{mn}\neq 0$.
Let $r^\varepsilon\in\mathcal{R}_{mn}$ denote an $(m,n)$-rational Chebyshev approximant $r^\varepsilon\approx f$ on $\varepsilon K$, i.e.,
\begin{equation}\label{eq:minerrChebeps}
\|r^\varepsilon - f\|_{\varepsilon K}
= \min_{r\in\mathcal{R}_{mn}} \|r - f\|_{\varepsilon K}.
\end{equation}
For a sufficiently small $\varepsilon>0$ and a radius $\rho>0$ depending on $m$, $n$ and $K$, the difference $r^\varepsilon-f$ has exactly $m+n+1$ zeros in $\varepsilon\rho\Delta$ counting multiplicity.

In particular, Chebyshev approximation corresponds to interpolatory best approximation in $\mathcal{I}_{mn}^{\varepsilon \Theta}$ as in~\eqref{eq:interpbestepsK} for $\Theta=\rho\Delta$.
\end{proposition}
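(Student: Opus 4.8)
The plan is to reduce everything to a zero-counting statement: I will show that for a suitable fixed $\rho$ and all small $\varepsilon$, the difference $r^\varepsilon-f$ has exactly $m+n+1$ zeros in $\varepsilon\rho\Delta$, counting multiplicity. Once this holds, Remark~\ref{rmk:fromzerostonodes} identifies $r^\varepsilon$ as a Newton--Pad\'e approximant for these nodes, so $r^\varepsilon\in\mathcal{I}_{mn}^{\varepsilon\rho\Delta}$. The closing assertion then follows from a minimax comparison: since $\mathcal{I}_{mn}^{\varepsilon\rho\Delta}\subset\mathcal{R}_{mn}$ we have $\min_{r\in\mathcal{R}_{mn}}\|r-f\|_{\varepsilon K}\le\min_{r\in\mathcal{I}_{mn}^{\varepsilon\rho\Delta}}\|r-f\|_{\varepsilon K}$, while $r^\varepsilon$ attains the left-hand minimum and also lies in $\mathcal{I}_{mn}^{\varepsilon\rho\Delta}$, forcing equality with $\Theta=\rho\Delta$.

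First I would bound the Chebyshev error. Comparing $r^\varepsilon$ with the interpolant at scaled Chebyshev nodes and using~\eqref{eq:errorlimitinterpchebnodes} gives $\|r^\varepsilon-f\|_{\varepsilon K}\le |a_{mn}|\,t_{m+n+1}\varepsilon^{m+n+1}+\mathcal{O}(\varepsilon^{m+n+2})$, so the Chebyshev error is $\mathcal{O}(\varepsilon^{m+n+1})$. By convergence of rational Chebyshev approximants to the Pad\'e approximant (\cite[Theorem~3b]{TG85}, as recalled in the introduction) we have $r^\varepsilon\to r^P$. Writing $r^\varepsilon=p^\varepsilon/q^\varepsilon$ and $r^P=p^P/q^P$ with $q^P(0)=D_{m,n}^0(f)\neq0$ (Remark~\ref{rmk:DmnfPade}), coefficient-wise convergence yields $q^\varepsilon(\varepsilon z)q^P(\varepsilon z)\to q^P(0)^2\neq0$ uniformly on any fixed disk. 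Moreover Proposition~\ref{prop:zerosreps} applies to $r^\varepsilon$ and already shows that $r^\varepsilon-f$ has no poles on, and exactly $m+n+1$ zeros in, some fixed disk $\alpha\Delta$, all tending to $0$; the remaining task is to confine these zeros to the much smaller disk $\varepsilon\rho\Delta\subset\alpha\Delta$.

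The crucial step is to upgrade the error bound from the thin set $\varepsilon K$ to a full disk. I would set $\delta^\varepsilon(z):=r^\varepsilon(\varepsilon z)-r^P(\varepsilon z)=N^\varepsilon(z)/\bigl(q^\varepsilon(\varepsilon z)q^P(\varepsilon z)\bigr)$, where $N^\varepsilon(z)=p^\varepsilon(\varepsilon z)q^P(\varepsilon z)-p^P(\varepsilon z)q^\varepsilon(\varepsilon z)$ is a polynomial in $z$ of degree $\le m+n$. The triangle inequality with the Chebyshev bound above and the Pad\'e bound $\|r^P-f\|_{\varepsilon K}=\mathcal{O}(\varepsilon^{m+n+1})$ gives $\|\delta^\varepsilon\|_K=\mathcal{O}(\varepsilon^{m+n+1})$, hence $\|N^\varepsilon\|_K=\mathcal{O}(\varepsilon^{m+n+1})$ since the denominator is bounded. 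Because $K$ has infinitely many points, $\|\cdot\|_K$ is a norm on the finite-dimensional space of polynomials of degree $\le m+n$; by equivalence of norms (a Bernstein--Walsh estimate) $\|N^\varepsilon\|_{\rho\Delta}\le C(\rho)\|N^\varepsilon\|_K$, and therefore $\|\delta^\varepsilon\|_{\rho\Delta}=\mathcal{O}(\varepsilon^{m+n+1})$. Tracking constants, both the Chebyshev and the Pad\'e errors carry a factor $|a_{mn}|$ and the denominators a factor $q^P(0)^2$, so these cancel against the corresponding factors in the genuine leading term below; the transfer constant $C(\rho)$ grows only like $\rho^{m+n}$, so the eventual threshold on $\rho$ depends only on $m$, $n$ and $K$. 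I expect this transfer, and the bookkeeping that pins $\rho$ to $m,n,K$ alone, to be the main obstacle.

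Finally I would localise the zeros by Rouch\'e's theorem at scale $\varepsilon$. On the circle $|z|=\rho$,
\[
r^\varepsilon(\varepsilon z)-f(\varepsilon z)=a_{mn}\varepsilon^{m+n+1}z^{m+n+1}\bigl(1+\mathcal{O}(\varepsilon)\bigr)+\delta^\varepsilon(z),
\]
where the explicit term has modulus of order $|a_{mn}|\rho^{m+n+1}\varepsilon^{m+n+1}$, while $\|\delta^\varepsilon\|_{\rho\Delta}=\mathcal{O}(\varepsilon^{m+n+1})$ with a constant that, after the cancellations above, grows only like $\rho^{m+n}$. Choosing $\rho$ large enough (in terms of $m$, $n$, $K$) that $|a_{mn}|\rho^{m+n+1}$ dominates that constant, and then $\varepsilon$ small, the perturbation $\delta^\varepsilon$ is strictly smaller than the leading term on $|z|=\rho$. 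Since $r^\varepsilon$ has no poles on $\varepsilon\rho\Delta\subset\alpha\Delta$, Rouch\'e's theorem (equivalently the argument principle) shows that $r^\varepsilon-f$ has the same number of zeros in $\varepsilon\rho\Delta$ as $a_{mn}w^{m+n+1}$, namely $m+n+1$ counting multiplicity, which completes the argument.
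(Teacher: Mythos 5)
Your proposal is correct, but the key localization step is genuinely different from the paper's. Both arguments share the same skeleton: bound the Chebyshev error by the scaled-Chebyshev-node interpolant via~\eqref{eq:errorlimitinterpchebnodes}, invoke $r^\varepsilon\to r^P$ and Proposition~\ref{prop:zerosreps} to get $m+n+1$ zeros tending to the origin and freedom from poles near $0$, and close with the same minimax comparison over $\mathcal{I}_{mn}^{\varepsilon\rho\Delta}\subset\mathcal{R}_{mn}$. Where you diverge is in confining the zeros to $\varepsilon\rho\Delta$: the paper works with the factorized error $r^\varepsilon(\varepsilon z)-f(\varepsilon z)=\varepsilon^{m+n+1}v^\varepsilon(\varepsilon z)\prod_j(z-\zeta_j(\varepsilon))$ and bounds each rescaled node separately by evaluating at a point $\chi_\kappa\in K$ where the degree-$(m+n)$ factor $\prod_{j\neq\kappa}|z-\zeta_j(\varepsilon)|$ is at least the Chebyshev constant $t_{m+n}$, yielding the explicit radius $\rho=2\,t_{m+n+1}/t_{m+n}+\max_{z\in K}|z|$; you instead transfer the $\mathcal{O}(\varepsilon^{m+n+1})$ smallness of $r^\varepsilon-r^P$ from $\varepsilon K$ to $\varepsilon\rho\Delta$ through the degree-$\le m+n$ polynomial $p^\varepsilon q^P-p^Pq^\varepsilon$ and norm equivalence on a finite-dimensional space, then count zeros by Rouch\'e against the Pad\'e error. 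Your route buys a standard, self-contained zero-counting argument that does not need the per-node Chebyshev-constant trick (only that $\|\cdot\|_K$ is a norm on polynomials of degree $\le m+n$, plus the maximum-principle bound $C(\rho)=\mathcal{O}(\rho^{m+n})$ — strictly speaking you need that scaling rather than a literal Bernstein--Walsh inequality, since $K$ is not assumed to have positive capacity); the price is a less explicit $\rho$ and some bookkeeping with the denominators $q^\varepsilon(\varepsilon\cdot)q^P(\varepsilon\cdot)\to q^P(0)^2\neq 0$, which your cancellation of the common factor $|a_{mn}|$ handles correctly. Both proofs use $a_{mn}\neq 0$ in the same essential way, to guarantee the comparison function is genuinely of order $\varepsilon^{m+n+1}$ on the relevant circle.
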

\begin{proof}
Following~\cite[Theorem~3b]{TG85},
Chebyshev approximants $r^\varepsilon$ to $f$ on $\varepsilon K$ converge to the Pad\'e approximant $r^P\approx f$, i.e., $r^\varepsilon\to r^P$ for $\varepsilon\to 0$.
While Chebyshev approximants are not necessarily unique, the notation $r^\varepsilon$ for $\varepsilon>0$ refers to an arbitrary but fixed sequence of Chebyshev approximants which satisfy~\eqref{eq:minerrChebeps} in the present proof.

Following Proposition~\ref{prop:zerosreps}, there exists a radius $\alpha>0$ s.t.\ the error $r^\varepsilon-f$ has exactly $m+n+1$ zeros $z_0(\varepsilon),\ldots,z_{m+n}(\varepsilon)$ on the disk $\alpha\Delta$ for sufficiently small $\varepsilon$. Moreover, Proposition~\ref{prop:zerosreps} shows that these zeros converge to the origin, i.e., $z_j(\varepsilon)\to 0$ for $\varepsilon \to 0$ and $j=0,\ldots,m+n$, and we may assume $z_j(\varepsilon)\in E$.
As noted in Remark~\ref{rmk:fromzerostonodes}, we may also refer to the zeros $z_0(\varepsilon),\ldots,z_{m+n}(\varepsilon)$ as interpolation nodes, where zeros of higher multiplicity can be understood as interpolation nodes of higher order. In particular, for a sufficiently small $\varepsilon>0$, the Chebyshev approximant $r^\varepsilon$ satisfies interpolation conditions~\eqref{eq:ratintcondconfluent} and corresponds to a Newton-Pad\'e approximant for the nodes $z_j(\varepsilon)$.

To proceed we define
\begin{equation*}
\zeta_j(\varepsilon) = z_j(\varepsilon)/\varepsilon,~~~j=0,\ldots,m+n,
\end{equation*}
and we let $\Delta_0$ denote a disk which contains no poles of $r^P$.

Since the Chebyshev approximant $r^\varepsilon$ can be understood as a rational interpolant with interpolation nodes $z_j(\varepsilon)\to0$, the results of Proposition~\ref{prop:cwconvergence} apply for $r^\varepsilon$.
In particular, for a sufficiently small $\varepsilon>0$ with $\varepsilon K\subset\Delta_0$ the Chebyshev approximant $r^\varepsilon$ satisfies the error representation~\eqref{eq:asymerrorinterp0}.
Substituting $\varepsilon \zeta_j(\varepsilon)$ and $\varepsilon z$ for $z_j(\varepsilon)$ and $z$, respectively, therein, we observe
\begin{equation}\label{eq:errinterpolate4}
r^\varepsilon(\varepsilon z) - f(\varepsilon z) = \varepsilon^{m+n+1} v^\varepsilon(\varepsilon z) \prod_{j=0}^{m+n} (z - \zeta_j(\varepsilon)) ,~~~z\in K.
\end{equation}
Since $r^\varepsilon$ is a Chebyshev approximant and minimizes the uniform error on $\varepsilon K$, the point-wise error of $r^\varepsilon$ is bounded by the uniform error of the rational interpolant at scaled Chebyshev nodes~\eqref{eq:errorlimitinterpchebnodes}. In particular,
\begin{equation*}
|r^\varepsilon(\varepsilon z) - f(\varepsilon z)|
\leq |a_{mn}|t_{m+n+1} \varepsilon^{m+n+1} + \mathcal{O}(\varepsilon^{m+n+2}),~~~\varepsilon \to 0,~~z\in K.
\end{equation*}
Substituting~\eqref{eq:errinterpolate4} therein and simplifying the identity, we observe
\begin{equation}\label{eq:errbound1x10}
\prod_{j=0}^{m+n} |z - \zeta_j(\varepsilon)| \left|v^\varepsilon(\varepsilon z)\right|
\leq |a_{mn}|t_{m+n+1} + \mathcal{O}(\varepsilon).
\end{equation}
We first assume the case $m+n>0$. Let $\kappa\in\{0,\ldots,m+n\}$ denote a fixed index.
Since $K$ is compact there exists $\chi_\kappa(\varepsilon)\in K$ s.t.
\begin{equation}\label{eq:errbound1x2}
\prod_{j\neq \kappa} |\chi_\kappa(\varepsilon) - \zeta_j(\varepsilon)|
= \max_{z\in K} \prod_{j\neq \kappa} |z - \zeta_j(\varepsilon)| \geq t_{m+n}>0.
\end{equation}
where $t_{m+n}$ refers to the Chebyshev constant of degree $m+n$ similar to~\eqref{eq:chebconst}.
Substituting $\chi_\kappa(\varepsilon)$ for $z$ in~\eqref{eq:errbound1x10} and making use of~\eqref{eq:errbound1x2}, we observe
\begin{equation}\label{eq:errbound1x3a}
|\chi_\kappa(\varepsilon) - \zeta_\kappa(\varepsilon)| \left|v^\varepsilon(\varepsilon \chi_\kappa(\varepsilon))\right|
\leq \frac{t_{m+n+1}}{t_{m+n}} |a_{mn}| + \mathcal{O}(\varepsilon).
\end{equation}

Following Proposition~\ref{prop:cwconvergence}.\ref{item:amncont0}, the remainder $v^\varepsilon$ satisfies 
$v^\varepsilon(\varepsilon z) \to a_{mn}$ for $\varepsilon\to 0$ and this convergence is uniform in $z$ on any compact set, particularly, for $z\in K$. This implies $v^\varepsilon(\varepsilon \chi_\kappa(\varepsilon))\to a_{mn}$ since $\chi_\kappa(\varepsilon)\in K$, and due to the assumption $a_{mn}\neq 0$ we conclude $a_{mn}/v^\varepsilon(\varepsilon \chi_\kappa(\varepsilon))\to 1$ for $\varepsilon\to 0$. In particular, for a sufficiently small $\varepsilon$, we have $|a_{mn}/v^\varepsilon(\varepsilon \chi_\kappa(\varepsilon))|< 2$ and~\eqref{eq:errbound1x3a} simplifies to
\begin{equation}\label{eq:chimzetabound}
|\chi_\kappa(\varepsilon) - \zeta_\kappa(\varepsilon)| \leq 2 \frac{t_{m+n+1}}{t_{m+n}},~~~\kappa=0,\ldots,m+n.
\end{equation}
The index $\kappa$ therein refers to an arbitrary and fixed index in $\{0,\ldots,m+n\}$.
Making use of~\eqref{eq:chimzetabound} and replacing the index $\kappa$ by $j$, we conclude that for a sufficiently small $\varepsilon$ the re-scaled interpolation nodes are bounded by
\begin{equation}\label{eq:zetaboundgeq0}
| \zeta_j(\varepsilon)|\leq |\chi_j(\varepsilon) - \zeta_j(\varepsilon)| + |\chi_j(\varepsilon)|\leq 2 \frac{t_{m+n+1}}{t_{m+n}} + \max_{z\in K} |z|,~~~j=0,\ldots,m+n.
\end{equation}
In a similar manner, for the trivial case $m=n=0$ the inequality~\eqref{eq:errbound1x10} and convergence  $a_{00}/v^\varepsilon(\varepsilon z )\to 1$ uniformly for $z\in K$ yields
\begin{equation*}
|z - \zeta_0(\varepsilon)|
\leq 2 t_{1},~~~\text{for $z\in K$ and sufficiently small $\varepsilon$},
\end{equation*}
and thus, for a fixed $z\in K$,
\begin{equation}\label{eq:zetaboundeq0}
|\zeta_0(\varepsilon)| \leq |z - \zeta_0(\varepsilon)| + |z|
\leq 2t_1 + \max_{z\in K} |z|,~~~m=n=0.
\end{equation}
Combining~\eqref{eq:zetaboundgeq0} and~\eqref{eq:zetaboundeq0}, we conclude that for $m,n\geq0$ and a sufficiently small $\varepsilon$
\begin{equation*}
\zeta_j(\varepsilon)\in\rho\Delta,~~~\text{for}~~\rho = 2 \frac{t_{m+n+1}}{t_{m+n}} + \max_{z\in K} |z|,~~~j=0,\ldots,m+n,
\end{equation*}
using the convention $t_0:=1$.
Thus, $r^\varepsilon$ attains interpolation nodes $z_j(\varepsilon)=\varepsilon \zeta_j(\varepsilon) \in \varepsilon \rho \Delta$, and for $\Theta = \rho \Delta$ this further implies $r^\varepsilon \in\mathcal{I}_{mn}^{\varepsilon \Theta}$.
Since the Chebyshev approximation minimizes the uniform error in $\mathcal{R}_{mn}$, it especially minimizes the uniform error in $\mathcal{I}_{mn}^{\varepsilon \Theta}\subset \mathcal{R}_{mn}$ which implies that a Chebyshev approximant $r^\varepsilon$ is also an interpolatory best approximant for $\Theta = \rho \Delta$.
\end{proof}

\section{Error of best approximations}\label{sec:asymerrorbest}

The following proposition applies to interpolatory best approximation, and most importantly, this includes the Chebyshev approximation as shown in Proposition~\ref{prop:Chebinterpolates}.

\begin{theorem}\label{thm:asymresults}
Let the $(m,n)$-Pad\'e approximant to $f:E\subset\mathbb{C}\to\mathbb{C}$ be non-degenerate and assume $a_{mn}\neq 0$.
Let $\Theta\subset E$ be a compact set which contains the $m+n+1$ Chebyshev nodes of $K$, i.e., $\tau_0,\ldots,\tau_{m+n}\in\Theta$.
Let $r^\varepsilon\in\mathcal{I}_{mn}^{\varepsilon\Theta}$ denote an interpolatory best approximation to $f$ on $\varepsilon K$, i.e., $r^\varepsilon$ satisfies~\eqref{eq:interpbestepsK}.
Then the following results hold true.
\begin{enumerate}[label=(\roman*)]
\item\label{item:cheb1} Let $z_0(\varepsilon),\ldots,z_{m+n}(\varepsilon)\in\varepsilon\Theta$ denote the interpolation nodes of $r^\varepsilon$ and define $\zeta_j(\varepsilon)=z_j(\varepsilon)/\varepsilon \in\Theta$ for $j=0,\ldots,m+n$. Then
\begin{equation}\label{eq:bestintnodesconv}
\zeta_j(\varepsilon) \to \tau_j,~~~j=0,\ldots,m+n,~~\varepsilon \to 0,
\end{equation}
up to ordering, where $\tau_j$ refer to the $m+n+1$ Chebyshev nodes of $K$.
\item\label{item:cheb2} Moreover, the points-wise error satisfies
\begin{equation}\label{eq:errorlimitbestiz}
r^{\varepsilon}(\varepsilon z)-f(\varepsilon z)
= a_{mn} \varepsilon^{m+n+1} \prod_{j=0}^{m+n}(z-\tau_j) + o(\varepsilon^{m+n+1}),~~~z\in K,
\end{equation}
for $\varepsilon\to 0$ with $a_{mn}$ as in~\eqref{eq:Padeasymerror}, and
\item\label{item:chebx} the uniform error satisfies
\begin{equation}\label{eq:errorlimitbesti}
\|r^\varepsilon-f\|_{\varepsilon K}
= t_{m+n+1} |a_{mn}| \varepsilon^{m+n+1}
+ \mathcal{O}(\varepsilon^{m+n+2}),
\end{equation}
for $\varepsilon\to 0$ with $t_{m+n+1}$ as in~\eqref{eq:chebconst}.
\end{enumerate}
\end{theorem}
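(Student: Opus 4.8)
The plan is to sandwich the uniform error $\|r^\varepsilon-f\|_{\varepsilon K}$ between two quantities that both equal $t_{m+n+1}|a_{mn}|\varepsilon^{m+n+1}$ to leading order. For the upper bound, since $\tau_0,\ldots,\tau_{m+n}\in\Theta$ by hypothesis, the rational interpolant $r^\varepsilon_\tau$ at the scaled Chebyshev nodes $\varepsilon\tau_j$ lies in $\mathcal{I}_{mn}^{\varepsilon\Theta}$, so optimality of $r^\varepsilon$ in \eqref{eq:interpbestepsK} gives $\|r^\varepsilon-f\|_{\varepsilon K}\le\|r^\varepsilon_\tau-f\|_{\varepsilon K}$, and \eqref{eq:errorlimitinterpchebnodes} evaluates the right-hand side as $|a_{mn}|t_{m+n+1}\varepsilon^{m+n+1}+\mathcal{O}(\varepsilon^{m+n+2})$. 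For the lower bound, I would regard $r^\varepsilon$ itself as the Newton--Pad\'e interpolant at its own nodes $z_j(\varepsilon)=\varepsilon\zeta_j(\varepsilon)$ with $\zeta_j(\varepsilon)\in\Theta$, so that the uniform-error estimate \eqref{eq:errorlimitinterpuniform} of Proposition~\ref{prop:convinterp} yields $\|r^\varepsilon-f\|_{\varepsilon K}=|a_{mn}|\varepsilon^{m+n+1}\max_{z\in K}\prod_{j=0}^{m+n}|z-\zeta_j(\varepsilon)|+\mathcal{O}(\varepsilon^{m+n+2})$, while the Chebyshev minimality \eqref{eq:chebconst} bounds the product factor below by $t_{m+n+1}$. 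Combining the two bounds proves \ref{item:chebx}; feeding the upper bound back into the estimate and dividing by $|a_{mn}|\varepsilon^{m+n+1}$ (using $a_{mn}\neq0$) shows in addition that $\max_{z\in K}\prod_{j=0}^{m+n}|z-\zeta_j(\varepsilon)|=t_{m+n+1}+\mathcal{O}(\varepsilon)$.

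This last identity says precisely that the node configuration $(\zeta_0(\varepsilon),\ldots,\zeta_{m+n}(\varepsilon))$ is an asymptotically minimizing sequence for the continuous, permutation-symmetric functional $\Phi(\zeta_0,\ldots,\zeta_{m+n})=\max_{z\in K}\prod_{j=0}^{m+n}|z-\zeta_j|$ on the compact set $\Theta^{m+n+1}$. To obtain \ref{item:cheb1} I would run a standard compactness--uniqueness argument: every subsequence admits a convergent sub-subsequence whose limit configuration, by continuity of $\Phi$, attains the minimal value $t_{m+n+1}$, and the strict inequality in \eqref{eq:chebconst} identifies this minimizer, as a multiset, with the Chebyshev nodes $\tau_0,\ldots,\tau_{m+n}$. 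Since every accumulation point is thus the Chebyshev configuration, the whole family converges, i.e.\ $\zeta_j(\varepsilon)\to\tau_j$ up to ordering.

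For the point-wise statement \ref{item:cheb2}, I would apply the point-wise part of Proposition~\ref{prop:convinterp}, cf.\ \eqref{eq:errorlimitinterpboth}, to $r^\varepsilon$ with its own nodes, giving $r^\varepsilon(\varepsilon z)-f(\varepsilon z)=a_{mn}\varepsilon^{m+n+1}\prod_{j=0}^{m+n}(z-\zeta_j(\varepsilon))+\mathcal{O}(\varepsilon^{m+n+2})$ uniformly in $z\in K$. Since $\zeta_j(\varepsilon)\to\tau_j$ by \ref{item:cheb1} and $K$ is compact, the polynomial factor converges uniformly, $\prod_{j=0}^{m+n}(z-\zeta_j(\varepsilon))=\prod_{j=0}^{m+n}(z-\tau_j)+o(1)$, so that $a_{mn}\varepsilon^{m+n+1}\prod_{j}(z-\zeta_j(\varepsilon))=a_{mn}\varepsilon^{m+n+1}\prod_{j}(z-\tau_j)+o(\varepsilon^{m+n+1})$, which together with the $\mathcal{O}(\varepsilon^{m+n+2})$ remainder yields \eqref{eq:errorlimitbestiz}.

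The crux throughout is the use of $\varepsilon$-dependent interpolation nodes in a result that is stated for fixed nodes: this substitution is legitimate only because Proposition~\ref{prop:convinterp} supplies its remainder bounds uniformly over all node choices in $\Theta$, which lets me insert $\zeta_j=\zeta_j(\varepsilon)$ into the estimates at each $\varepsilon$. I expect this uniformity bookkeeping to be the most delicate point, together with the fact that the node convergence in \ref{item:cheb1} is only qualitative. The latter is exactly why \ref{item:cheb2} carries the weaker remainder $o(\varepsilon^{m+n+1})$ rather than $\mathcal{O}(\varepsilon^{m+n+2})$, whereas \ref{item:chebx} retains the sharp $\mathcal{O}(\varepsilon^{m+n+2})$, since there the product factor is controlled directly through the sandwich rather than through the limiting nodes.
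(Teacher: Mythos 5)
Your proposal is correct and follows essentially the same route as the paper: the same sandwich via the scaled-Chebyshev-node interpolant (upper bound) and Proposition~\ref{prop:convinterp} applied to $r^\varepsilon$'s own nodes together with~\eqref{eq:chebconst} (lower bound), the same minimizing-sequence/compactness argument for the node convergence (the paper phrases it as a contradiction, you phrase it as a quantitative $t_{m+n+1}+\mathcal{O}(\varepsilon)$ identity plus subsequence extraction), and the same substitution $\zeta_j(\varepsilon)=\tau_j+o(1)$ to obtain the $o(\varepsilon^{m+n+1})$ point-wise remainder. You also correctly identify the key justification the paper relies on, namely that the remainder bounds in Proposition~\ref{prop:convinterp} are uniform over node configurations in $\Theta$, which licenses inserting $\varepsilon$-dependent nodes.
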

\begin{proof}
Since the Pad\'e approximant is assumed to be non-degenerate, for a sufficiently small $\varepsilon$ the interpolatory best approximation $r^\varepsilon$ exists and converges to the Pad\'e approximation as noted in Corollary~\ref{prop:bestinterpconv}, i.e., $r^\varepsilon\to r^P$ for $\varepsilon\to 0$.

Following Proposition~\ref{prop:zerosreps}, the non-degenerate case with $a_{mn}\neq 0$ implies that, for some radius $\alpha>0$, the interpolatory best approximation $r^\varepsilon$ has exactly $m+n+1$ interpolation nodes in the disk $\alpha\Delta$ for sufficiently small $\varepsilon$. Thus, for a sufficiently small $\varepsilon$ s.t.\ $\varepsilon\Theta\subset \alpha\Delta$ we may refer to the interpolation nodes attained by $r^\varepsilon$ as $z_0(\varepsilon),\ldots,z_{m+n}(\varepsilon)\in\varepsilon\Theta$ with $z_j(\varepsilon)=\varepsilon \zeta_j(\varepsilon)$ where $\zeta_j(\varepsilon)\in\Theta$.

Proposition~\ref{prop:convinterp} provides an asymptotic error representation for rational interpolants with interpolation nodes $z_0(\varepsilon),\ldots,z_{m+n}(\varepsilon)$ with $z_j(\varepsilon)=\varepsilon \zeta_j$ where $\zeta_j \in\Theta$ are fixed. 
However, since the results of this proposition hold true uniformly for $\zeta_0,\ldots\zeta_{m+n}$ in the compact set $\Theta$, this carries over to the case of interpolation nodes depending on $\varepsilon$, i.e., $\zeta_j=\zeta_j(\varepsilon)\in\Theta$. In particular,~\eqref{eq:errorlimitinterpboth} implies
\begin{equation}\label{eq:asymerrinterinproof0}
r^\varepsilon(\varepsilon z) - f(\varepsilon z)
= a_{mn} \varepsilon^{m+n+1} \prod_{j=0}^{m+n}(z-\zeta_j(\varepsilon)) +\mathcal{O}(\varepsilon^{m+n+2}),~~~z\in K,
\end{equation}
and~\eqref{eq:errorlimitinterpuniformboth} implies
\begin{equation}\label{eq:asymerrinterinproof1}
\|r^\varepsilon - f \|_{\varepsilon K}
= |a_{mn}| \varepsilon^{m+n+1}  \max_{z\in K} \prod_{j=0}^{m+n}|z-\zeta_j(\varepsilon)| + \mathcal{O}(\varepsilon^{m+n+2}),
\end{equation}
for $\varepsilon \to 0$.
We remark that~\eqref{eq:asymerrinterinproof0} and~\eqref{eq:asymerrinterinproof1} hold true independently of convergence of $\zeta_j(\varepsilon)$.

We proceed with the proof of~\ref{item:chebx}.
Since we assume that $\Theta$ contains the Chebyshev nodes $\tau_0,\ldots,\tau_{m+n}\in \Theta$, the $(m,n)$-rational interpolant with interpolation nodes corresponding to the scaled Chebyshev nodes $\varepsilon\tau_0,\ldots,\varepsilon\tau_{m+n}$ is in the set $\mathcal{I}_{mn}^{\varepsilon\Theta}$. Thus, the error of the interpolatory best approximant $r^\varepsilon$ is bounded by the asymptotic error~\eqref{eq:errorlimitinterpchebnodes}, i.e., the error representation~\eqref{eq:errorlimitbesti} holds true as an upper bound,
\begin{equation}\label{eq:asymerrupperbound}
\|r^\varepsilon - f \|_{\varepsilon K}
\leq
t_{m+n+1} |a_{mn}| \varepsilon^{m+n+1} + \mathcal{O}(\varepsilon^{m+n+2}).
\end{equation}
On the other hand, substituting the lower bound from~\eqref{eq:chebconst} in~\eqref{eq:asymerrinterinproof1}, we observe
\begin{equation*}
\|r^\varepsilon - f \|_{\varepsilon K}
\geq t_{m+n+1}  |a_{mn}| \varepsilon^{m+n+1} + \mathcal{O}(\varepsilon^{m+n+2}).
\end{equation*}
Combining this with~\eqref{eq:asymerrupperbound}, we conclude~\eqref{eq:errorlimitbesti}.

We proceed with the proof~\ref{item:cheb1} by showing that the nodes $\zeta_0(\varepsilon),\ldots, \zeta_{m+n}(\varepsilon)$ attain a minimizing sequence for the $\infty$-norm~\eqref{eq:chebconst}. Namely, 
\begin{equation}\label{eq:minpolylikecheb}
\max_{z\in K} \prod_{j=0}^{m+n} |z-\zeta_j(\varepsilon)|
\to  \max_{z\in K} \prod_{j=0}^{m+n} |z-\tau_j|
= t_{m+n+1},~~~\text{for $\varepsilon\to0$}.
\end{equation}
We prove this claim by contradiction, assuming $\zeta_j(\varepsilon)$ is not a minimizing sequence of~\eqref{eq:minpolylikecheb} for $\varepsilon\to 0$. This implies that for some $\delta>0$ there exists a sequence $\{\varepsilon_\ell\}_{\ell\in\mathbb{N}}$ with $\varepsilon_\ell\to0$, s.t.
\begin{equation}\label{eq:minpolylikechebnot}
\max_{z\in K}\prod_{j=0}^{m+n} |z-\zeta_j(\varepsilon_\ell)|
\geq t_{m+n+1} + \delta,~~~\ell\in\mathbb{N}.
\end{equation}
Substituting $\varepsilon_\ell$, and consequently~\eqref{eq:minpolylikechebnot}, in~\eqref{eq:asymerrinterinproof1} shows
\begin{equation*}
\|r_{\varepsilon_\ell} - f \|_{\varepsilon_\ell K}
\geq |a_{mn}| \varepsilon_\ell^{m+n+1} ( t_{m+n+1} + \delta) + \mathcal{O}(\varepsilon_\ell^{m+n+2}),~~~\ell\to \infty.
\end{equation*}
However, this is a contradiction to~\eqref{eq:asymerrupperbound}, which implies that~\eqref{eq:minpolylikecheb} holds true.
We recall that the Chebyshev nodes uniquely minimize the right-hand side of~\eqref{eq:minpolylikecheb}. Since $\Theta$ is compact, this implies that $\zeta_j(\varepsilon)\to\tau_j$ up to ordering which completes the proof of~\ref{item:cheb1}.

To show~\eqref{eq:errorlimitbestiz} in~\ref{item:cheb2}, we apply~\eqref{eq:bestintnodesconv} from~\ref{item:cheb1} in~\eqref{eq:asymerrinterinproof0}. We remark that the remainder in~\eqref{eq:errorlimitbestiz} satisfies the convergence rate $o(\varepsilon^{m+n+1})$ since the presented proof of this error representation relies on $\zeta_j(\varepsilon)\to \tau_j$, i.e., $\zeta_j(\varepsilon)= \tau_j + o(1)$.
\end{proof}

\section{Applications}\label{sec:consequences}

\subsection{The error ratio of Chebyshev and Pad\'e approximants}\label{subsec:uniformerrorPadeCheb}

In the present subsection we provide an explicit formula for the quotient of the asymptotic errors of the Pad\'e approximant and interpolatory best approximants which follows Theorem~\ref{thm:asymresults}.\ref{item:chebx}. In the context of Chebyshev approximants, similar relations were previously discussed in~\cite[Section~3]{TG85}.
\begin{corollary}
In the setting of Theorem~\ref{thm:asymresults}, the uniform error of interpolatory best approximants $r^\varepsilon$ satisfies
\begin{equation*}
\lim_{\varepsilon \to 0}
\frac{\|r^P - f \|_{\varepsilon K}}{\|r^\varepsilon - f \|_{\varepsilon K} }
= \frac{\max_{z\in K}|z|^{m+n+1}}{t_{m+n+1}}.
\end{equation*}
\end{corollary}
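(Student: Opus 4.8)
The plan is to compute the asymptotic leading-order term of the numerator $\|r^P - f\|_{\varepsilon K}$ directly, since the denominator's asymptotics are already supplied by Theorem~\ref{thm:asymresults}.\ref{item:chebx}, and then to form the quotient of the two leading terms and pass to the limit.

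First I would treat the Pad\'e error. The representation~\eqref{eq:Padeasymerror} gives $r^P(z) - f(z) = v^0(z) z^{m+n+1}$ on a disk $\Delta_0$, where $v^0$ is holomorphic with $v^0(0) = a_{mn}$. For $\varepsilon$ small enough that $\varepsilon K \subset \Delta_0$, substituting $\varepsilon z$ for $z$ and taking the maximum over $z \in K$ yields
\[
\|r^P - f\|_{\varepsilon K} = \varepsilon^{m+n+1} \max_{z\in K} |v^0(\varepsilon z)|\, |z|^{m+n+1}.
\]
Since $v^0$ is continuous with $v^0(0) = a_{mn}$ and $K$ is compact, $v^0(\varepsilon z) \to a_{mn}$ uniformly in $z \in K$ as $\varepsilon \to 0$, so the maximum above converges to $|a_{mn}| \max_{z\in K}|z|^{m+n+1}$. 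Hence $\|r^P - f\|_{\varepsilon K} = |a_{mn}| \max_{z\in K}|z|^{m+n+1}\,\varepsilon^{m+n+1}(1 + o(1))$.

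Next I would recall from Theorem~\ref{thm:asymresults}.\ref{item:chebx} that $\|r^\varepsilon - f\|_{\varepsilon K} = t_{m+n+1}|a_{mn}|\varepsilon^{m+n+1}(1+o(1))$. Since $a_{mn} \neq 0$ and $t_{m+n+1} > 0$, the denominator is strictly positive for sufficiently small $\varepsilon$, and on forming the ratio the common factors $|a_{mn}|\varepsilon^{m+n+1}$ cancel:
\[
\frac{\|r^P - f\|_{\varepsilon K}}{\|r^\varepsilon - f\|_{\varepsilon K}} = \frac{\max_{z\in K}|z|^{m+n+1}(1+o(1))}{t_{m+n+1}(1+o(1))}.
\]
Passing to the limit $\varepsilon\to 0$ then gives the claimed value $\max_{z\in K}|z|^{m+n+1}/t_{m+n+1}$.

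There is no deep obstacle here; the only point requiring care is the justification that the leading coefficient $|a_{mn}|$ may be pulled through the maximum in the Pad\'e error, which rests on the uniform convergence $v^0(\varepsilon z) \to a_{mn}$ on the compact set $K$ together with the continuity of $\varepsilon \mapsto \max_{z\in K} |v^0(\varepsilon z)|\,|z|^{m+n+1}$. I would observe that this computation is essentially Proposition~\ref{prop:convinterp} applied to the confluent node configuration $\zeta_0 = \cdots = \zeta_{m+n} = 0$, for which the interpolant is $r^P$ itself; however, since $0$ need not lie in $\Theta$, I would rely directly on the explicit error representation~\eqref{eq:Padeasymerror} rather than invoking that proposition.
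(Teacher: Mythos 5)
Your proposal is correct and follows essentially the same route as the paper: the paper likewise combines Theorem~\ref{thm:asymresults}.\ref{item:chebx} for the denominator with the expansion $\|r^P - f\|_{\varepsilon K} = |a_{mn}|(\max_{z\in K}|z|\,\varepsilon)^{m+n+1} + \mathcal{O}(\varepsilon^{m+n+2})$ obtained from~\eqref{eq:Padeasymerror}. Your write-up merely makes explicit the uniform-convergence justification that the paper leaves implicit.
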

\begin{proof}
This assertion follows from Theorem~\ref{thm:asymresults}.\ref{item:chebx} and
\begin{equation*}
\|r^P - f \|_{\varepsilon K} = |a_{mn}| \left(\max_{z\in K}|z| \varepsilon\right)^{m+n+1} + \mathcal{O}(\varepsilon^{m+n+2}),~~~\varepsilon\to 0.
\end{equation*}
\end{proof}

\subsection{Relation to previous convergence results}\label{subsec:relationtomeinardus}
Considering the notation of~\cite[Theorem~103]{Me67} for $a_{mn}$ in~\eqref{eq:amnformula}, i.e., $a_{mn} = (\Delta^{m+1,n+1}(f)/\Delta^{m,n}(f))_{x=0}$, we note that for $K=[-1,1]$ (and respectively, $t_{m+n+1}=2^{-(m+n)}$~\eqref{eq:ratinterpolateCheb}), the asymptotic error in Theorem~\ref{thm:asymresults}.\ref{item:chebx} reads
\begin{equation*}
\|r^\varepsilon-f\|_{\varepsilon K}
= \frac{\varepsilon^{m+n+1}}{2^{m+n}} \left|\left(\frac{\Delta^{m+1,n+1}(f)}{\Delta^{m,n}(f)}\right)_{x=0}\right| 
+ \mathcal{O}(\varepsilon^{m+n+2}),~~~\varepsilon\to 0.
\end{equation*}
Thus, for the interval $K=[-1,1]$ the asymptotically leading order term in Theorem~\ref{thm:asymresults}.\ref{item:chebx} coincides with the respective term in \cite[Theorem~103]{Me67}.

For the polynomial case $n=0$ the Pad\'e approximation corresponds to the Taylor approximation. Respectively, $a_{mn}$ in~\eqref{eq:Padeasymerror} for $n=0$ corresponds to
\begin{equation*}
a_{m0} =  f^{(m+1)}(0)/(m+1)!,~~~m\geq 0,
\end{equation*}
where $f^{(k)}(0)$ denotes the $k$th derivative of $f$ evaluated at $z=0$.
Consequently, Theorem~\ref{thm:asymresults}.\ref{item:chebx} for the polynomial Chebyshev approximation and $K=[-1,1]$ reads
\begin{equation*}
\|r^\varepsilon-f\|_{\varepsilon K}
= \varepsilon^{m+1} \frac{|f^{(m+1)}(0)|}{2^{m} (m+1)!} 
+ \mathcal{O}(\varepsilon^{m+2}),~~~\varepsilon\to 0.
\end{equation*}
Thus, for the polynomial case and $K=[-1,1]$ the asymptotically leading order term in Theorem~\ref{thm:asymresults}.\ref{item:chebx} coincides with the respective term in \cite[Theorem~62]{Me67} and similar results shown previously in~\cite{MW60a,Ni62} for the real setting.

\subsection{Complex and real Chebyshev approximations}\label{subsec:complexvsreal}

In general Chebyshev approximants to real functions $f$ on $K=[-1,1]$ are complex-valued~\cite[Chapter~24]{Tre13}, but there is some interest on real rational Chebyshev approximation in this setting as mentioned in Subsection~\ref{subsec:realChebI}.
Comparing the error of real and complex Chebyshev approximations to real functions is topic of~\cite{Le86} and others. Theorem~\ref{thm:asymresults} provides further insight on the asymptotic error of real and complex Chebyshev approximations when the underlying domain of convergence shrinks to the origin. In particular, real and complex Chebyshev approximations correspond to interpolatory best approximations~\eqref{eq:interpbestepsK} with interpolation nodes on the interval $\Theta=[-1,1]$ and the disk $\Theta=\rho\Delta$ for some radius $\rho$, respectively, as shown in Subsection~\ref{subsec:realChebI} and Section~\ref{sec:chebipnodes}. Thus, in both cases the convergence results in Theorem~\ref{thm:asymresults} apply for $\varepsilon\to 0$, showing that real and complex Chebyshev approximations attain the same asymptotic error as the underlying interval shrinks to the origin.

\subsection{Exponential function}\label{subsec:exp}

{\em Convergence on shrinking domains and in the limit $m+n \to \infty$.}
For the exponential function $f=\exp$ the error of the $(m,n)$-Chebyshev approximation on a disk with a fixed radius and increasing degree $m+n\to\infty$ was previously studied in~\cite{Tre84a} and an explicit formula for the asymptotic error is given therein. We remark that the asymptotically leading order term for $m+n\to\infty$ coincides with the respective term for the $(m,n)$-Chebyshev approximation for fixed degrees $m$ and $n$ on a shrinking disk $K=\Delta$, see Theorem~\ref{thm:asymresults}.\ref{item:chebx} with $a_{mn}$ as given in~\eqref{eq:amnexp} and $t_{m+n+1}=1$.
A similar relation holds true on the interval $K=[-1,1]$ with $t_{m+n+1}=2^{-(m+n)}$, for which the asymptotic error for $m+n\to \infty$ is provided in~\cite{Bra84}.

\smallskip
{\em Unitary best approximation.} The unitary best approximation to $\exp(\mathrm{i}\omega x)$, for a frequency $\omega>0$ and $x\in [-1,1]$, was introduced recently in~\cite{JS23u}, and can be rephrased to fit to the setting of the present work as shown in Subsection~\ref{subsec:unitaryapproxI}. Following Proposition~\ref{prop:unitaryandinterpolation}.\ref{item:unitaryandinterpolatorybest}, the unitary best approximation corresponds to the interpolatory best approximation to $f=\exp$ with $K=\Theta=\mathrm{i}[-1,1]$. Thus, convergence results for $\varepsilon\to 0$ in Theorem~\ref{thm:asymresults} apply with $a_{nn}$ as in~\eqref{eq:amnexp} and $t_{m+n+1}=2^{-(m+n)}$ (substituting $m=n$ therein). These results are in line with convergence results discussed in \cite[sections~8 and 9]{JS23u} for $\omega\to 0$, which is equivalent to $\varepsilon\to0$ in the present work.

Moreover, Theorem~\ref{thm:asymresults} also applies for the Chebyshev approximation without the restriction to unitarity in this setting, i.e., $r\approx \exp$ for $K=\mathrm{i}[-1,1]$, which correspond to interpolatory best approximations for $\Theta = \rho\Delta$ for some radius $\rho$ as discussed in Section~\ref{sec:chebipnodes}. Comparing the error of the Chebyshev approximation and the unitary best approximation to $\exp(\mathrm{i}\omega x)$ will be topic of a future work. In this context Theorem~\ref{thm:asymresults} shows that the asymptotic error of these approximations coincides for $\varepsilon\to 0$, or respectively, when the underlying frequency $\omega$ goes to zero.


\begin{thebibliography}{MW60b}

\bibitem[Art11]{Ar11}
M.~Artin.
\newblock {\em Algebra}.
\newblock Pearson Prentice Hall, Upper Saddle River, NJ, USA, second edition, 2011.

\bibitem[Bel70]{Be70}
V.~Belevitch.
\newblock Interpolation matrices.
\newblock {\em Philips Res. Rep.}, 25:337--369, 1970.

\bibitem[BGM96]{BG96}
G.A.~Baker and P.~Graves-Morris.
\newblock {\em {P}ad\'{e} Approximants}.
\newblock Encyclopedia of Mathematics and its Applications. Cambridge University Press, Cambridge, UK, second edition, 1996.
\newblock \href {https://doi.org/10.1017/CBO9780511530074} {\path{doi:10.1017/CBO9780511530074}}.

\bibitem[Bra84]{Bra84}
D.~Braess.
\newblock On the conjecture of {M}einardus on rational approximation of $e^x$, {I}{I}.
\newblock {\em J. Approx. Theory}, 40(4):375--379, 1984.
\newblock \href {https://doi.org/10.1016/0021-9045(84)90012-1} {\path{doi:10.1016/0021-9045(84)90012-1}}.

\bibitem[Cla78]{Cla78}
G.~Claessens.
\newblock On the {N}ewton--{P}ad\'e approximation problem.
\newblock {\em J. Approx. Theory}, 22(2):150--160, 1978.
\newblock \href {https://doi.org/10.1016/0021-9045(78)90062-X} {\path{doi:10.1016/0021-9045(78)90062-X}}.

\bibitem[CSS74]{CSS74}
C.K.~Chui, O.~Shisha, and P.W.~Smith.
\newblock {P}ad{\'e} approximants as limits of best rational approximants.
\newblock {\em J. Approx. Theory}, 12(2):201–204, 1974.
\newblock \href {https://doi.org/10.1016/0021-9045(74)90049-5} {\path{doi:10.1016/0021-9045(74)90049-5}}.

\bibitem[DB05]{Bo05}
C.~De~Boor.
\newblock Divided differences.
\newblock {\em Surv. Approx. Theory}, 1:46--69, 2005.
\newblock \href {https://arxiv.org/abs/math/0502036} {\path{arXiv:math/0502036}}.

\bibitem[GJ76]{GJ76}
M.A.~Gallucci and W.B.~Jones.
\newblock Rational approximations corresponding to {N}ewton series ({N}ewton-{P}ad{\'e} approximants).
\newblock {\em J. Approx. Theory}, 17(4):366–392, 1976.
\newblock \href {https://doi.org/10.1016/0021-9045(76)90081-2} {\path{doi:10.1016/0021-9045(76)90081-2}}.

\bibitem[GT83]{GT83}
M.H.~Gutknecht and L.N.~Trefethen.
\newblock Nonuniqueness of best rational {C}hebyshev approximations on the unit disk.
\newblock {\em J. Approx. Theory}, 39(3):275--288, 1983.
\newblock \href {https://doi.org/10.1016/0021-9045(83)90099-0} {\path{doi:10.1016/0021-9045(83)90099-0}}.

\bibitem[Gut90]{Gu90}
M.H.~Gutknecht.
\newblock In what sense is the rational interpolation problem well posed.
\newblock {\em Constr. Approx.}, 6(4):437--450, 1990.
\newblock \href {https://doi.org/10.1007/BF01888274} {\path{doi:10.1007/BF01888274}}.

\bibitem[Hig02]{Hi02}
N.J.~Higham.
\newblock {\em Accuracy and Stability of Numerical Algorithms}.
\newblock Society for Industrial and Applied Mathematics, Philadelphia, PA, USA, second edition, 2002.
\newblock \href {https://doi.org/10.1137/1.9780898718027} {\path{doi:10.1137/1.9780898718027}}.

\bibitem[Hig08]{Hi08}
N.J.~Higham.
\newblock {\em Functions of Matrices}.
\newblock Society for Industrial and Applied Mathematics, Philadelphia, PA, USA, 2008.
\newblock \href {https://doi.org/10.1137/1.9780898717778} {\path{doi:10.1137/1.9780898717778}}.

\bibitem[IR08]{IR08}
I.C.F.~Ipsen and R.~Rehman.
\newblock Perturbation bounds for determinants and characteristic polynomials.
\newblock {\em SIAM J. Matrix Anal. Appl.}, 30(2):762–776, 2008.
\newblock \href {https://doi.org/10.1137/070704770} {\path{doi:10.1137/070704770}}.

\bibitem[JS23]{JS23u}
T.~Jawecki and P.~Singh.
\newblock Unitary rational best approximations to the exponential function.
\newblock preprint at https://arxiv.org/abs/2312.13809, 2023.
\newblock \href {https://arxiv.org/abs/2312.13809} {\path{arXiv:2312.13809}}.

\bibitem[JS24]{JS24}
T.~Jawecki and P.~Singh.
\newblock Unitarity of some barycentric rational approximants.
\newblock {\em IMA J. Numer. Anal.}, 44(4):2070--2089, 2024.
\newblock \href {https://doi.org/10.1093/imanum/drad066} {\path{doi:10.1093/imanum/drad066}}.

\bibitem[Lev86]{Le86}
A.L.~Levin.
\newblock On the degree of complex rational approximation to real functions.
\newblock {\em Constr. Approx.}, 2(1):213--219, 1986.
\newblock \href {https://doi.org/10.1007/BF01893427} {\path{doi:10.1007/BF01893427}}.

\bibitem[LS06]{LS06b}
E.~Levin and E.B.~Saff.
\newblock Potential theoretic tools in polynomial and rational approximation.
\newblock In J.D.~Fournier, J.~Grimm, J.~Leblond, and J.R.~Partington, editors, {\em Harmonic Analysis and Rational Approximation}, pages 71--94. Springer, Berlin, Heidelberg, 2006.
\newblock \href {https://doi.org/10.1007/11601609_5} {\path{doi:10.1007/11601609_5}}.

\bibitem[Mei67]{Me67}
G.~Meinardus.
\newblock {\em Approximation of Functions: Theory and Numerical Methods}.
\newblock Springer tracts in natural philosophy. Spring{\-}er-Ver{\-}lag, Berlin, 1967.
\newblock \href {https://doi.org/10.1007/978-3-642-85643-3} {\path{doi:10.1007/978-3-642-85643-3}}.

\bibitem[Meied]{Me6x}
G.~Meinardus.
\newblock {Z}ur {K}onstruktion rationaler {A}pproximationen.
\newblock As yet unpublished (not available to the authors of the present work), undated.

\bibitem[MW60a]{MW60a}
H.~Maehly and C.~Witzgall.
\newblock {T}schebyscheff-{A}pproximationen in kleinen {I}ntervallen {I}.
\newblock {\em Numer. Math.}, 2(1):142--150, 1960.
\newblock \href {https://doi.org/10.1007/BF01386218} {\path{doi:10.1007/BF01386218}}.

\bibitem[MW60b]{MW60b}
H.~Maehly and C.~Witzgall.
\newblock {T}schebyscheff-{A}pproximationen in kleinen {I}ntervallen {II}.
\newblock {\em Numer. Math.}, 2(1):293--307, 1960.
\newblock \href {https://doi.org/10.1007/BF01386230} {\path{doi:10.1007/BF01386230}}.

\bibitem[Nit62]{Ni62}
J.C.C.~Nitsche.
\newblock {\"U}ber die {A}bh{\"a}ngigkeit der {T}schebyscheffschen {A}pproximierenden einer differenzierbaren {F}unktion vom {I}ntervall.
\newblock {\em Numer. Math.}, 4(1):262–276, 1962.
\newblock \href {https://doi.org/10.1007/bf01386318} {\path{doi:10.1007/bf01386318}}.

\bibitem[Sal62]{Sa62}
H.~Salzer.
\newblock Note on osculatory rational interpolation.
\newblock {\em Math. Comp.}, 16(80):486–491, 1962.
\newblock \href {https://doi.org/10.1090/s0025-5718-1962-0149648-7} {\path{doi:10.1090/s0025-5718-1962-0149648-7}}.

\bibitem[SM03]{SM03}
E.~S{\"u}li and D.~Mayers.
\newblock {\em An Introduction to Numerical Analysis}.
\newblock Cambridge University Press, Cambridge, UK, 2003.
\newblock \href {https://doi.org/10.1017/CBO9780511801181} {\path{doi:10.1017/CBO9780511801181}}.

\bibitem[SV78]{SV78}
E.B.~Saff and R.S.~Varga.
\newblock Nonuniqueness of best complex rational approximations to real functions on real intervals.
\newblock {\em J. Approx. Theory}, 23(1):78--85, 1978.
\newblock \href {https://doi.org/10.1016/0021-9045(78)90081-3} {\path{doi:10.1016/0021-9045(78)90081-3}}.

\bibitem[TG85]{TG85}
L.N.~Trefethen and M.H.~Gutknecht.
\newblock On convergence and degeneracy in rational {P}ad{\'e} and {C}hebyshev approximation.
\newblock {\em SIAM J. Math. Anal.}, 16(1):198–210, 1985.
\newblock \href {https://doi.org/10.1137/0516015} {\path{doi:10.1137/0516015}}.

\bibitem[Tre84a]{Tre84}
L.N.~Trefethen.
\newblock Square blocks and equioscillation in the {P}ad{\'e}, walsh, and cf tables.
\newblock In P.R.~Graves-Morris, E.B.~Saff, and R.S.~Varga, editors, {\em Rational Approximation and Interpolation}, pages 170--181, Berlin, 1984. Spring{\-}er-Ver{\-}lag.

\bibitem[Tre84b]{Tre84a}
L.N.~Trefethen.
\newblock The asymptotic accuracy of rational best approximations to $e^z$ on a disk.
\newblock {\em J. Approx. Theory}, 40(4):380--383, 1984.
\newblock \href {https://doi.org/10.1016/0021-9045(84)90013-3} {\path{doi:10.1016/0021-9045(84)90013-3}}.

\bibitem[Tre13]{Tre13}
L.N.~Trefethen.
\newblock {\em Approximation Theory and Approximation Practice}.
\newblock Society for Industrial and Applied Mathematics, Philadelphia, PA, USA, 2013.
\newblock \href {https://doi.org/10.1137/1.9780898717778} {\path{doi:10.1137/1.9780898717778}}.

\bibitem[Tre23]{Tre23}
L.N.~Trefethen.
\newblock Numerical analytic continuation.
\newblock {\em Jpn. J. Ind. Appl. Math.}, 40(3):1587–1636, 2023.
\newblock \href {https://doi.org/10.1007/s13160-023-00599-2} {\path{doi:10.1007/s13160-023-00599-2}}.

\bibitem[Wal31]{Wa31}
J.L.~Walsh.
\newblock The existence of rational functions of best approximation.
\newblock {\em Trans. Amer. Math. Soc.}, 33(3):668--689, 1931.
\newblock \href {https://doi.org/10.1090/s0002-9947-1931-1501609-5} {\path{doi:10.1090/s0002-9947-1931-1501609-5}}.

\bibitem[Wal34]{Wa34}
J.L.~Walsh.
\newblock On approximation to an analytic function by rational functions of best approximation.
\newblock {\em Math. Z.}, 38(1):163–176, 1934.
\newblock \href {https://doi.org/10.1007/bf01170632} {\path{doi:10.1007/bf01170632}}.

\bibitem[Wal64]{Wa64}
J.L.~Walsh.
\newblock {P}ad{\'e} approximants as limits of rational functions of best approximation.
\newblock {\em Journal of Mathematics and Mechanics}, 13(2):305--312, 1964.

\bibitem[Wal69]{Wa69}
J.L.~Walsh.
\newblock {\em Interpolation and Approximation by Rational Functions in the Complex Domain}.
\newblock American Mathematical Society, Providence, RI, USA, 1969.

\bibitem[Wal74]{Wa74}
J.L.~Walsh.
\newblock {P}ad{\'e} approximants as limits of rational functions of best approximation, real domain.
\newblock {\em J. Approx. Theory}, 11(3):225--230, 1974.
\newblock \href {https://doi.org/10.1016/0021-9045(74)90014-8} {\path{doi:10.1016/0021-9045(74)90014-8}}.

\bibitem[Wuy75]{Wu75}
L.~Wuytack.
\newblock On the osculatory rational interpolation problem.
\newblock {\em Math. Comp.}, 29(131):837--843, 1975.
\newblock \href {https://doi.org/10.2307/2005295} {\path{doi:10.2307/2005295}}.

\end{thebibliography}
\end{document}